\newlength{\defbaselineskip}
\newcommand{\setlinespacing}[1]%
           {\setlength{\baselineskip}{#1 \defbaselineskip}}
\theoremstyle{plain}
\newtheorem{thm}{Theorem}[section]
\newtheorem{lem}[thm]{Lemma}
\theoremstyle{definition}
\makeatletter\@addtoreset{equation}{section} \makeatother
\begin{document}

\title {Multi-Dimensional Backward Stochastic
 Differential Equations of Diagonally Quadratic generators\footnotemark[1]}

\author{Ying Hu\footnotemark[2] ~~~~ and ~~~~
Shanjian Tang\footnotemark[3]}
\date{}

\footnotetext[1]{This research was supported by  the National Natural Science Foundation of China (Grants \#10325101 and \#11171076), and by Science and Technology Commission, Shanghai Municipality (Grant No. 14XD1400400).}

\footnotetext[2]{IRMAR, Universit\'e Rennes 1, Campus de Beaulieu,
35042 Rennes Cedex, France.
\textit{E-mail}: \texttt{Ying.Hu@univ-rennes1.fr} (Ying Hu).}

\footnotetext[3]{Department of Finance and Control Sciences, School of
Mathematical Sciences, Fudan University, Shanghai 200433, China.
\texttt{sjtang@fudan.edu.cn} (Shanjian Tang).}

\maketitle


\begin{abstract}
The paper is concerned with  adapted solution of a multi-dimensional BSDE with a ``diagonally" quadratic generator,  the quadratic part of whose $i$th component only depends on the $i$th row of the second unknown variable. Local and global solutions are given.  In our proofs, it is natural and crucial to apply both John-Nirenberg and reverse H\"older inequalities for BMO martingales.
\end{abstract}

AMS Subject Classification: 60H15; 35R60; 93E20

Keywords: Backward stochastic differential equation, Quadratic BSDE, BMO martingale, John-Nirenberg inequality, the reverse H\"older inequality

\section{Introduction}

Consider the following backward stochastic differential equation (BSDE):
\begin{equation}\label{bsde0}
    Y_t=\xi+\int_t^Tg(s,Y_s,Z_s)\, ds-\int_t^T Z_s\, dW_s, \quad t\in [0,T].
\end{equation}
Here, $\{W_t:=(W_t^1, \ldots, W_t^d)^*, 0\le t\le T\}$ is a
$d$-dimensional standard Brownian motion defined on some
probability space $(\Omega, \mathscr{F}, P)$. Denote by $\{\mathscr{F}_t,0\le t\le T\}$ the augmented natural filtration of the
standard Brownian motion $W$. The function $g: \Omega \times [0,T]\times R^n\times R^{n\times d}\to R^n$ is called the generator of BSDE~\eqref{bsde0}. BSDE~\eqref{bsde0} was invented by Bismut in~\cite{Bismut1973} for the linear case,  and in~\cite{Bismut1976} for a specifically structured matrix-valued nonlinear case where the matrix-valued generator contains a quadratic form of the second unknown.  The uniformly Lipschitz case was later studied by Pardoux and Peng~\cite{PardouxPeng1990}.

  Bismut~\cite{Bismut1976} derived a matrix-valued BSDE of a {\it quadratic} generator---the so-called backward stochastic Riccati equation (BSRE) in the study of linear quadratic optimal control with random coefficients, while he could not solve it in general. In that paper, he described the difficulty and failure of his fixed point techniques in the proof of the existence and uniqueness  for BSDE of a quadratic generator (i.e., the so-called quadratic BSDE). It has inspired  subsequent intensive efforts in the research of quadratic BSDE~\eqref{bsde0}. Nowadays numerous progress has been made in this issue: Kobylonski~\cite{Koby2000} and Briand and Hu~\cite{BriandHu2006} gave the existence and uniqueness result for the case of a scalar-valued ($n=1$) quadratic BSDE, Tang~\cite{Tang2003,Tang2014} solved (using the stochastic maximum principle in~\cite{Tang2003} and dynamic programming in~\cite{Tang2014}) the existence and uniqueness result (posed by Bismut~\cite{Bismut1976}) for the general BSRE, and Tevzadze~\cite{Tevzadze} proved the existence and uniqueness result for multi-dimensional quadratic BSDE~\eqref{bsde0} under the assumption that the terminal value is {\it sufficiently small} in the supremum norm (also called the {\it small terminal value problem}). Frei and dos Reis~\cite{FreiReis} constructed a counterexample to show that multi-dimensional quadratic BSDE~\eqref{bsde0} might fail to have a global solution $(Y,Z)$ on $[0,T]$ such that $Y$ is essentially bounded, which illustrates the difficulty of the quadratic part contributing to the underlying scalar generator as an unbounded process---the exponential of whose time-integral is likely to have no finite expectation. Very recently, Cheridito and Nam~\cite{CheriditoNam} addressed a special system of quadratic BSDEs in the Markovian context, which arises from the equilibrium problem with interacting agents in a financial market (see Frei and dos Reis~\cite{FreiReis} for further descriptions). Neither global nor local (in time) positive results are found in the literature for solvability of multidimensional quadratic BSDEs.

Throughout the paper, for $i=1,\ldots, n$, we denote by $z^i$ the $i$th row (component) of matrix (vector) $z$. In the paper, we study the multi-dimensional BSDE~\eqref{bsde0} of the following structured quadratic generator $g:=(g^1,\cdots,g^n)^*$ with
\begin{equation}\label{diagQuadratic}
\begin{split}
g^i(t,y,z)= f^i(t,z^i)+h^i(t,y,z), \quad i=1,\ldots, n,
\end{split}
\end{equation}
for any $(t,y,z)\in [0,T]\times R^n\times R^{n\times d}$, where
\begin{equation}
\begin{split}
|f^i(t, z^i)|\le&\  C(1+|z^i|^2),\\
|h^i(t,y,z)|\le&\  C(1+|y|+|z|^{1+\alpha}), \quad \alpha\in [0,1), \quad i=1,\ldots,n;
\end{split}
\end{equation}
or equivalently where
\begin{equation}
\begin{split}
|g^i(t,y,z)|\le&\  C(1+|y|+|z|^{1+\alpha}+|z^i|^2), \quad \alpha\in [0,1), \quad i=1,\ldots,n.
\end{split}
\end{equation}
This kind of structured  generator $g$ is said to be ``diagonally" quadratic. Assuming some additional (locally or globally) Lipschitz continuity in both unknowns of the generator $g$,
we prove that for $\alpha\in [0,1)$ and a given bounded terminal value  $\xi$, multi-dimensional diagonally quadratic BSDE~\eqref{bsde0} admits a unique local solution $(Y,Z)$ on $[T-\varepsilon, T]$ ($\varepsilon>0$)  such that $Y$ is bounded and $Z\cdot M$ is a BMO martingale (see Theorem~\ref{thm 1}), and moreover, it has a unique global solution on $[0,T]$ in the case of $\alpha=0$ (see Theorem~\ref{thm 2}).

The rest of the paper is organized as follows. In Section 1, we introduce the background for quadratic BSDEs and some related (rather than exhausted) studies. In Section 2, we prepare some notations and known inequalities for BMO martingales. We also prove existence, uniqueness, comparison, and a priori estimate for one-dimensional quadratic BSDEs of possibly unbounded generators, and state the main results of the paper. In Section 3, we prove our local solution for our multi-dimensional diagonally quadratic BSDE~\eqref{bsde0}. Finally, in Section 4, we prove our global solution to our multi-dimensional diagonally quadratic BSDE~\eqref{bsde0}.

\section{Preliminaries and statement of main results}

Let $M=(M_t, \mathscr{F}_t)$ be a uniformly integrable martingale with $M_0=0$, and for
$p\in [1, \infty)$ we set
\begin{equation}
\begin{split}
 \|M\|_{BMO_p(P)}:= \left|\sup_{\tau} E^{\mathscr{F}_\tau}\left[\left(\langle M\rangle_\tau^\infty\right)^{p\over2}\right]^{1\over p}\right|_\infty
\end{split}
\end{equation}
where the supremum is taken over all stopping times $\tau$. The class $\{M : \|M\|_{BMO_p} <
\infty\}$ is denoted by $BMO_p$, which is  written into  $BMO_p(P)$ whenever it is necessary to indicate the underlying probability, and observe that $\|\cdot\|_{BMO_p}$ is a norm on this space and $BMO_2(P)$ is a Banach space. 

Denote by $\mathscr{E}(M)$ the stochastic exponential of a local martingale $M$ and by $\mathscr{E}(M)_s^t$ that of $M_t-M_s$. Denote by $\beta \cdot M$ the stochastic integral of an adapted process $\beta$ with respect to the local continuous martingale $M$.  Define
$$\log^+(x):=0\vee \log x, \quad x\in (0, \infty).$$ We have
$$\log^+ x\le \log (1+x), \quad x\in (0, \infty).$$

Denote by $\mathscr{S}^\infty(R^n)$ the totality of $R^n$-valued $\mathscr{F}_t$-adapted essentially bounded continuous processes, and by $|Y|_\infty$ the essential supremum norm of $Y\in \mathscr{S}^\infty(R^n)$. It can be verified that $(\mathscr{S}^\infty(R^n), |\cdot|_\infty)$ is a Banach space.

\begin{lem} (John-Nirenberge inequality) If $\|M\|_{BMO_2}<1$, then for any stopping time $\tau$,
\begin{equation}\label{JohnNir}
    E^{\mathscr{F}_\tau}\left[e^{\langle M\rangle_\tau^\infty}\right]\le {1\over 1-\|M\|^2_{BMO_2}}.
\end{equation}
\end{lem}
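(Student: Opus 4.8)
The plan is to deduce the exponential bound from a family of polynomial moment bounds for the continuous increasing process $A:=\langle M\rangle$, and then to sum a geometric series. Put $a:=\|M\|_{BMO_2}^2$, so that $a<1$ by hypothesis; unwinding the definition of the $BMO_2$ norm, this says precisely that
$$E^{\mathscr{F}_\sigma}\left[A_\infty - A_\sigma\right] \le a \qquad \text{for every stopping time } \sigma.$$
I claim that, more generally, $E^{\mathscr{F}_\sigma}\left[(A_\infty - A_\sigma)^m\right]\le m!\,a^m$ for every stopping time $\sigma$ and every integer $m\ge 0$. Granting the claim, monotone convergence gives
$$E^{\mathscr{F}_\tau}\left[e^{A_\infty - A_\tau}\right] = \sum_{m=0}^\infty \frac{1}{m!}\,E^{\mathscr{F}_\tau}\left[(A_\infty - A_\tau)^m\right] \le \sum_{m=0}^\infty a^m = \frac{1}{1-a},$$
which is \eqref{JohnNir}, since $A_\infty - A_\tau = \langle M\rangle_\tau^\infty$.

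The claim I would prove by induction on $m$: the case $m=0$ is trivial, and $m=1$ is the displayed $BMO_2$ bound. For the inductive step I use the pathwise identity, valid for the continuous increasing process $A$ and any stopping time $\sigma$,
$$\frac{(A_\infty - A_\sigma)^m}{m!} = \int_\sigma^\infty \frac{(A_\infty - A_s)^{m-1}}{(m-1)!}\, dA_s,$$
obtained by integrating $d\bigl[(A_\infty - A_s)^m\bigr] = -m\,(A_\infty - A_s)^{m-1}\, dA_s$ over $(\sigma,\infty)$. Applying $E^{\mathscr{F}_\sigma}$ to both sides, and interchanging the conditional expectation with the Stieltjes integral against $dA$ --- that is, replacing the integrand $(A_\infty - A_s)^{m-1}/(m-1)!$ by its optional projection $E^{\mathscr{F}_s}\bigl[(A_\infty - A_s)^{m-1}\bigr]/(m-1)!$, which by the induction hypothesis is $\le a^{m-1}$ for $dA$-a.e.\ $s$ --- one obtains
$$E^{\mathscr{F}_\sigma}\left[(A_\infty - A_\sigma)^m\right] \le m!\,a^{m-1}\,E^{\mathscr{F}_\sigma}\left[A_\infty - A_\sigma\right] \le m!\,a^m,$$
where the last inequality is once more the $BMO_2$ bound. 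This closes the induction.

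The one genuinely delicate point is the interchange of $E^{\mathscr{F}_\sigma}$ with $\int_\sigma^\infty(\cdots)\,dA_s$ above: this is where the optional projection theorem (or, $A$ being continuous, the predictable projection) is really needed, and if one prefers to avoid invoking it one can instead discretize $A$ along deterministic partitions, apply the tower property to each Riemann--Stieltjes term, and pass to the limit by monotone convergence. Everything else --- the pathwise identity, the induction, the summation --- is bookkeeping; note that the standing assumption $\|M\|_{BMO_2}<1$ (equivalently $a<1$) enters only at the final summation, but there it is indispensable, the series diverging otherwise.
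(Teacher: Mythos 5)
Your proof is correct and is essentially the standard argument given in Kazamaki \cite{Kazamaki}, which is precisely the source the paper cites for this lemma: establish the moment bound $E^{\mathscr{F}_\sigma}\left[(\langle M\rangle_\infty - \langle M\rangle_\sigma)^m\right] \le m!\,\|M\|_{BMO_2}^{2m}$ by induction (via the pathwise integral identity and the optional-projection interchange you describe), then sum the exponential series. The one delicate point you flag --- justifying the interchange of the conditional expectation with the Stieltjes integral against $d\langle M\rangle$ --- is indeed the only nontrivial step, and your treatment of it (optional projection of the anticipating integrand, with a discretization fallback) is sound.
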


As a corollary of the last lemma, we have
\begin{lem}  For any $p\in [1, \infty)$, there is a generic constant $L_p>0$ such that for any uniformly integrable martingale $M$,
\begin{equation}\label{norm equivalence}
    \|M\|_{BMO_p}\le L_p  \|M\|_{BMO_2}.
\end{equation}
\end{lem}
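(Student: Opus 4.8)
The plan is to deduce \eqref{norm equivalence} directly from the John-Nirenberg inequality \eqref{JohnNir} by a scaling argument, together with the elementary fact that each power of $x\ge 0$ is dominated by a constant multiple of $e^x$. First I would dispose of the degenerate cases: if $\|M\|_{BMO_2}=\infty$ there is nothing to prove, and if $\|M\|_{BMO_2}=0$ then $\langle M\rangle_\infty=0$ $P$-a.s., so $M\equiv 0$ and $\|M\|_{BMO_p}=0$. Assuming henceforth $0<\|M\|_{BMO_2}<\infty$, set $N:=M/(2\|M\|_{BMO_2})$. Since $\langle N\rangle=(2\|M\|_{BMO_2})^{-2}\langle M\rangle$, one has $\|N\|_{BMO_2}=1/2<1$, so $N$ is admissible for \eqref{JohnNir}.

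Next I would apply \eqref{JohnNir} to $N$: for every stopping time $\tau$,
\[
E^{\mathscr{F}_\tau}\!\left[e^{\langle N\rangle_\tau^\infty}\right]\le\frac{1}{1-\|N\|_{BMO_2}^2}=\frac43 .
\]
Combining this with the pointwise bound $x^{p/2}\le c_p\,e^{x}$ valid for all $x\ge 0$, where $c_p:=\sup_{x\ge 0}x^{p/2}e^{-x}=(p/(2e))^{p/2}<\infty$, gives, for every $\tau$,
\[
E^{\mathscr{F}_\tau}\!\left[\bigl(\langle N\rangle_\tau^\infty\bigr)^{p/2}\right]\le c_p\,E^{\mathscr{F}_\tau}\!\left[e^{\langle N\rangle_\tau^\infty}\right]\le\frac43\,c_p .
\]
Raising to the power $1/p$ and taking the essential supremum over $\omega$ and the supremum over $\tau$ yields $\|N\|_{BMO_p}\le(4c_p/3)^{1/p}$. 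Finally, since $\bigl(\langle N\rangle_\tau^\infty\bigr)^{p/2}=(2\|M\|_{BMO_2})^{-p}\bigl(\langle M\rangle_\tau^\infty\bigr)^{p/2}$, the quantity $\|\cdot\|_{BMO_p}$ is homogeneous of degree one under scalar multiplication, whence $\|M\|_{BMO_p}=2\|M\|_{BMO_2}\,\|N\|_{BMO_p}\le 2(4c_p/3)^{1/p}\|M\|_{BMO_2}$, i.e.\ \eqref{norm equivalence} holds with $L_p:=2(4c_p/3)^{1/p}$.

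I do not anticipate any genuine obstacle here: the argument is entirely standard. The only points deserving a little care are the scalar homogeneity of $\|\cdot\|_{BMO_p}$ in $M$, which is what legitimizes the normalization that brings $\|N\|_{BMO_2}$ below $1$ so that \eqref{JohnNir} may be invoked, and the bookkeeping of the constants through that normalization. One may also note that for $p\in[1,2]$ the inequality is immediate from Jensen's inequality applied to the concave map $x\mapsto x^{p/2}$ (with $L_p=1$), so the real content is the range $p>2$; the proof above nevertheless treats all $p\in[1,\infty)$ uniformly.
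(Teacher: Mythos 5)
Your proof is correct, and it takes exactly the route the paper itself signals: the paper introduces this lemma with ``As a corollary of the last lemma,'' i.e.\ of the John--Nirenberg inequality \eqref{JohnNir}, and then simply defers the details to Kazamaki's monograph. Your scaling argument (normalize $N=M/(2\|M\|_{BMO_2})$ so that $\|N\|_{BMO_2}=\tfrac12<1$, apply \eqref{JohnNir}, dominate $x^{p/2}$ by $c_p e^x$, and undo the normalization via the degree-one homogeneity of $\|\cdot\|_{BMO_p}$) is the standard way to carry out precisely that corollary, and the bookkeeping is accurate, including $c_p=(p/(2e))^{p/2}$ and $L_p=2(4c_p/3)^{1/p}$. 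Your closing remark that $p\le 2$ already follows from Jensen with $L_p=1$ is also correct.
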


Define
$$
\Phi(x):= \left\{1+{1\over  x^2}\log{2x-1\over 2(x-1)}\right\}^{1\over2}-1, \quad x>1.
$$
It is clearly continuous and decreasing, satisfying $\Phi(1+0) =+\infty$ and $\Phi(\infty)=0$.

\begin{lem} (The reverse H\"older inequality) Let $p\in (1, \infty)$.  If $\|M\|_{BMO_2}<\Phi(p)$, then $\mathscr{E}(M)$ satisfies the reverse H\"older inequality $(R_p)$:
\begin{equation}\label{Rp}
    E^{\mathscr{F}_\tau}\left[\mathscr{E}(M)_\tau^\infty\right]^p\le c_p
\end{equation}
for  any stopping time $\tau$, with a constant $c_p>0$ depending only on $p$.
\end{lem}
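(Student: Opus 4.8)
The plan is to bound $E^{\mathscr{F}_\tau}\bigl[(\mathscr{E}(M)_\tau^\infty)^p\bigr]$ uniformly over stopping times $\tau$ by combining the two BMO inequalities already at hand. Fix a stopping time $\tau$ and set $N:=M-M^{\tau}$, the tail of $M$ beyond $\tau$; then $N_\tau=0$, $\langle N\rangle_\tau=0$, $\mathscr{E}(M)_\tau^\infty=\mathscr{E}(N)_\infty$, and since $\langle M-M^{\tau}\rangle_\infty=\langle M\rangle_\infty-\langle M\rangle_\tau$ one checks from the definition of the $BMO_2$-norm that $\|N\|_{BMO_2}\le\|M\|_{BMO_2}<\Phi(p)$. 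The algebraic heart of the argument is the identity, valid for every $q>p$,
\[
 (\mathscr{E}(M)_\tau^\infty)^p=\bigl(\mathscr{E}(qN)_\infty\bigr)^{p/q}\exp\!\Bigl(\tfrac{p(q-1)}{2}\,\langle N\rangle_\infty\Bigr),
\]
which factors the $p$th power into a power of a genuine stochastic exponential times a pure exponential of the quadratic variation.

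I would then apply the conditional H\"older inequality with exponents $q/p$ and $q/(q-p)$. The stochastic-exponential factor contributes $\bigl(E^{\mathscr{F}_\tau}[\mathscr{E}(qN)_\infty]\bigr)^{p/q}\le 1$, because a nonnegative local martingale is a supermartingale and $\mathscr{E}(qN)_\tau=1$ (equivalently, change measure via $dQ=\mathscr{E}(qN)_\infty\,dP$). There remains
\[
 E^{\mathscr{F}_\tau}\bigl[(\mathscr{E}(M)_\tau^\infty)^p\bigr]\le\Bigl(E^{\mathscr{F}_\tau}\bigl[\exp(\lambda\langle N\rangle_\infty)\bigr]\Bigr)^{(q-p)/q},\qquad \lambda:=\frac{pq(q-1)}{2(q-p)}.
\]
Since $\sqrt{\lambda}\,N$ has $BMO_2$-norm $\sqrt{\lambda}\,\|N\|_{BMO_2}$ and $\langle\sqrt{\lambda}\,N\rangle_\tau=0$, the John--Nirenberg inequality applies as soon as $\lambda\|M\|_{BMO_2}^2<1$ and gives $E^{\mathscr{F}_\tau}[\exp(\lambda\langle N\rangle_\infty)]\le(1-\lambda\|M\|_{BMO_2}^2)^{-1}$, hence $(R_p)$ with $c_p=(1-\lambda\|M\|_{BMO_2}^2)^{-(q-p)/q}$. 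Choosing $q$ to minimise $\lambda(q)$ over $q>p$ (the minimiser is $q^{*}=p+\sqrt{p(p-1)}$) makes the range of admissible $\|M\|_{BMO_2}$ as large as this method allows.

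The point that needs care is that the interval on which this runs must contain $\{\|M\|_{BMO_2}<\Phi(p)\}$. With John--Nirenberg in the crude form above the argument yields $(R_p)$ on $\{\|M\|_{BMO_2}<\sqrt{2/p}\,(\sqrt p-\sqrt{p-1})\}$; this does contain $\{\|M\|_{BMO_2}<\Phi(p)\}$ once $p$ is bounded away from $1$, but not as $p\to1^{+}$ (where $\Phi(p)\to\infty$). To cover the full range one argues more sharply: the transcendental quantity $\tfrac1{p^2}\log\tfrac{2p-1}{2(p-1)}=\tfrac1{p^2}\int_p^\infty\tfrac{dx}{(2x-1)(x-1)}$ defining $\Phi$ arises either by replacing the crude John--Nirenberg constant by its sharp value, or, following Kazamaki, by iterating the scheme above in the integrability exponent --- each round enlarging the exponent for which a reverse H\"older estimate holds, the continuation breaking down precisely when $\|M\|_{BMO_2}$ reaches $\Phi(p)$. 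I expect this sharpening/continuation to be the main obstacle; the decomposition, the supermartingale bound, and the single use of John--Nirenberg are otherwise routine, once one also records that a nonnegative local martingale is a supermartingale and that $\|M-M^{\tau}\|_{BMO_2}\le\|M\|_{BMO_2}$.
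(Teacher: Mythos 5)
Your single-pass computation is correct as far as it goes: the tail process $N:=M-M^\tau$, the factorization $(\mathscr{E}(M)_\tau^\infty)^p=(\mathscr{E}(qN)_\infty)^{p/q}\exp\bigl(\tfrac{p(q-1)}{2}\langle N\rangle_\infty\bigr)$, the conditional H\"older split with exponents $q/p$ and $q/(q-p)$, the supermartingale bound $E^{\mathscr{F}_\tau}[\mathscr{E}(qN)_\infty]\le 1$, the John--Nirenberg estimate on $\sqrt{\lambda}\,N$, and the optimizer $q^*=p+\sqrt{p(p-1)}$ giving admissibility on $\|M\|_{BMO_2}<\sqrt{2/p}(\sqrt p-\sqrt{p-1})$ all check out. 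But, as you yourself flag, this range fails to contain the stated hypothesis $\|M\|_{BMO_2}<\Phi(p)$ when $p$ is near $1$: your threshold tends to $\sqrt2$ as $p\to1^+$, whereas $\Phi(p)\to\infty$ (numerically the crossover is near $p\approx1.005$). This is not cosmetic, because the paper's subsequent use of the lemma consists precisely in choosing $p$ close to $1$ so that $\Phi(p)$ exceeds a prescribed BMO norm $K$; for $K\ge\sqrt2$ your argument produces nothing. So as written the proposal proves a genuinely weaker lemma, not the stated one.

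The missing ingredient is the iterative continuation that you mention but do not execute. The constant $\Phi(p)$ is not obtained by a single sharper application of John--Nirenberg; in Kazamaki's argument one first obtains $(R_{p_1})$ for some $p_1$ slightly above $1$, then reuses the resulting integrability of $\mathscr{E}(M)$ to run the H\"older step at a larger exponent, and the scalar recursion this generates can be continued exactly while $\|M\|_{BMO_2}<\Phi(p)$; the function $\Phi$ is \emph{defined} by the breakdown of that continuation. That recursion is the real content of the lemma. A secondary point: your final bound $c_p=(1-\lambda\|M\|_{BMO_2}^2)^{-(q-p)/q}$ depends on $\|M\|_{BMO_2}$, whereas the statement asks for a constant depending on $p$ alone; even after the range is repaired you must dominate $\|M\|_{BMO_2}$ by $\Phi(p)$ in the constant. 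Finally, note that the paper supplies no proof of this lemma at all --- it is cited from Kazamaki's monograph --- so the comparison is against Kazamaki's iteration, which is precisely the step you only sketch.
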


All the proofs of the preceding three lemmas can be found in Kazamakie~\cite{Kazamaki}. The following lemma plays an important role in our subsequent arguments. It indicates that  following the proof of \cite[Theorem 3.3, page 57]{Kazamaki} can give a preciser dependence of the two constants $c_1, c_2$ on $\beta\cdot M$. For reader's convenience, we also give a complete proof here.

\begin{lem} For $K>0$, there are constants $c_1>0$ and $c_2>0$ such that for any BMO martingale $M$, we have for any BMO martingale $N$ such that $\|N\|_{BMO_2(P)}\le K$,
\begin{equation}\label{uniform BMO estimate}
c_1\|M\|_{BMO_2(P)}\le \|\widetilde M\|_{BMO_2(\widetilde P)}\le c_2\|M\|_{BMO_2(P)}
\end{equation}
where ${\widetilde M}:=M-\langle M, N\rangle$ and $d\widetilde P:= \mathscr{E}(N)_0^\infty d P$.
\end{lem}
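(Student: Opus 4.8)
The plan is to transfer the $BMO(P)$-bound for $M$ to a $BMO(\widetilde P)$-bound for $\widetilde M$ (and back), by comparing conditional expectations under $P$ and $\widetilde P$ via the density $\mathscr E(N)$, and controlling that density uniformly through the reverse H\"older inequality applied to $\mathscr E(N)$ — which is available precisely because $\|N\|_{BMO_2(P)}\le K$. First I would recall that, since $N\in BMO_2(P)$ with norm at most $K$, the norm-equivalence lemma (Lemma with \eqref{norm equivalence}) gives $\|N\|_{BMO_q(P)}\le L_qK$ for every $q$; hence by rescaling $N\mapsto N/\lambda$ for a suitably large $\lambda=\lambda(K)$ one can always reduce to the case $\|N/\lambda\|_{BMO_2}<\Phi(p)$ for a chosen $p$, and more importantly both $N$ and $\widetilde M$ are genuine $BMO$ martingales under $\widetilde P$ (this is the content of \cite[Theorem 3.3]{Kazamaki}). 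The point of the present lemma over the cited one is that the constants $c_1,c_2$ depend on $N$ only through the bound $K$, so I would keep careful track of this dependence throughout.

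Next I would carry out the quantitative step. Fix a stopping time $\tau$. Under $\widetilde P$ the quadratic variation is unchanged, $\langle\widetilde M\rangle = \langle M\rangle$, so
\begin{equation*}
E^{\widetilde P}\!\left[\langle M\rangle_\tau^\infty \,\big|\, \mathscr F_\tau\right]
= \frac{1}{\mathscr E(N)_\tau}\, E^{P}\!\left[\mathscr E(N)_\tau^\infty\,\langle M\rangle_\tau^\infty \,\big|\, \mathscr F_\tau\right].
\end{equation*}
Apply H\"older's inequality under $P$ with exponents $p$ and $p'=p/(p-1)$:
\begin{equation*}
E^{P}\!\left[\mathscr E(N)_\tau^\infty\,\langle M\rangle_\tau^\infty \,\big|\, \mathscr F_\tau\right]
\le \left(E^{P}\!\left[(\mathscr E(N)_\tau^\infty)^{p}\,\big|\,\mathscr F_\tau\right]\right)^{1/p}
\left(E^{P}\!\left[(\langle M\rangle_\tau^\infty)^{p'}\,\big|\,\mathscr F_\tau\right]\right)^{1/p'}.
\end{equation*}
The first factor is bounded by $c_p^{1/p}\,\mathscr E(N)_\tau$ via the reverse H\"older inequality $(R_p)$ (Lemma with \eqref{Rp}), since $\mathscr E(N)_\tau^\infty = \mathscr E(N)_\tau\,\mathscr E(N)_\tau^\infty$ restarted — more precisely one uses the multiplicative property $\mathscr E(N)_\tau^\infty=\mathscr E(N)_\tau\cdot\mathscr E(N_\cdot-N_\tau)_\tau^\infty$ together with $(R_p)$ applied at time $\tau$. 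The second factor is bounded, by the norm-equivalence lemma, by $(L_{2p'})^2\|M\|_{BMO_2(P)}^2$. Taking the supremum over $\tau$ and combining yields $\|\widetilde M\|_{BMO_2(\widetilde P)}^2\le c_p^{1/p}(L_{2p'})^2\|M\|_{BMO_2(P)}^2$, i.e. the right-hand inequality with $c_2=c_2(K)$ (recall $p$, and hence $c_p$, $L_{2p'}$, were fixed as functions of $K$ in the reduction step).

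For the left-hand inequality I would run the symmetric argument backwards: $M = \widetilde M + \langle M,N\rangle = \widetilde M - \langle \widetilde M, -N\rangle$ where, under $\widetilde P$, the process $-N$ (or rather $N - \langle N\rangle$, which is the $\widetilde P$-Girsanov-transformed version of $-N$) is again in $BMO(\widetilde P)$ with norm controlled by $K$, and $dP = \mathscr E(N-\langle N\rangle\text{-type density})^{-1}\,d\widetilde P = \mathscr E(\widetilde N)_0^\infty\,d\widetilde P$ for an appropriate $\widetilde N\in BMO(\widetilde P)$ whose norm is again bounded in terms of $K$ alone. Applying the already-proven ``$c_2$ direction'' in the world $(\widetilde P,\widetilde M,\widetilde N)$ gives $\|M\|_{BMO_2(P)}\le c_2(K)\,\|\widetilde M\|_{BMO_2(\widetilde P)}$, which is the desired left inequality with $c_1 := 1/c_2(K)$.

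The main obstacle is the bookkeeping in the reduction/symmetry step: one must verify that the ``reverse roles'' density is itself the stochastic exponential of a $BMO(\widetilde P)$ martingale with norm bounded by a function of $K$ only, and that the $BMO$-norms of $N$ under $\widetilde P$ are comparable to those under $P$ with constants depending solely on $K$. This is exactly the qualitative statement of \cite[Theorem 3.3]{Kazamaki}; the quantitative refinement amounts to inspecting that proof and noting that every constant there is a function of $\|N\|_{BMO_2}$ alone. Once that is in hand, the H\"older-plus-reverse-H\"older estimate above is routine, and the rescaling $N\mapsto N/\lambda(K)$ removes any smallness restriction needed to invoke $(R_p)$.
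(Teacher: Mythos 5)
Your proposal follows essentially the same route as the paper: for the upper bound, Bayes' rule converts the $\widetilde P$-conditional expectation of $\langle M\rangle_\tau^\infty$ to a $P$-conditional expectation weighted by $\mathscr E(N)_\tau^\infty$, then H\"older together with the reverse H\"older inequality $(R_p)$ (with $p$ chosen so that $\Phi(p)>K$) and the $BMO_p$--$BMO_2$ norm equivalence give $c_2$; for the lower bound, the roles of $P$ and $\widetilde P$ are reversed once one knows the Girsanov transform of $N$ lies in $BMO(\widetilde P)$ with norm bounded by a function of $K$ alone. The one step you defer --- ``inspecting Kazamaki's proof and noting that every constant is a function of $\|N\|_{BMO_2}$ alone'' --- is precisely the content of part (ii) of the paper's argument, which derives the explicit bound $\widetilde E^{\mathscr F_\tau}[\langle N\rangle_\tau^\infty]\le 2(q-1)\log(c_p+1)$ via the $\log^+$ estimate from Kazamaki's Theorem~2.4; so your sketch is correct but leaves the substantive computation implicit. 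One small caveat: the rescaling $N\mapsto N/\lambda$ does not achieve the reduction you describe, since $\mathscr E(N/\lambda)$ is not a power of $\mathscr E(N)$ and the reverse H\"older inequality is needed for $\mathscr E(N)$ itself; the correct move, which you also state, is simply to pick $p=p(K)$ close enough to $1$ that $\Phi(p)>K$, with no rescaling required.
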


\begin{proof}
(i) {\it Derivation of the second inequality.}  From Chapter 3 of Kazamaki, there is $p>1$ such that  $\Phi(p)>K$, which gives in view of Lemma~\ref{Rp} that for any stopping time $\tau$,
$$
E^{\mathscr{F}_\tau}\left[\left(\mathscr{E}(N)_\tau^\infty\right)^p\right]\le c_p.
$$
Therefore,
\begin{eqnarray}
    \begin{split}
\|\widetilde M\|^2_{BMO_2(\widetilde P)}=&\ \sup_{\tau}E^{\mathscr{F}_\tau}\left[\langle M\rangle^\infty_\tau\mathscr{E}(N)_\tau^\infty\right]\\
\le &\ \sup_{\tau} E^{\mathscr{F}_\tau} \left[\left(\langle M\rangle_\tau^\infty\right)^q\right]^{1\over q}c_p^{1\over p}\\
\le &\  L_{2q}^2c_p^{1\over p}\|M\|^2_{BMO_2(P)}.
\end{split}
\end{eqnarray}

(ii) {\it Derivation of the first inequality.} From the proof of \cite[Theorem 3.3, page 57]{Kazamaki}, we have
\begin{eqnarray}
    \begin{split}
&\ {\widetilde E}^{\mathscr{F}_\tau}\left[\left(\mathscr{E}(-{\widetilde N})_\infty^\tau \right)^{1\over q-1}\right]\\
=&\ E^{\mathscr{F}_\tau}\left[\left(\mathscr{E}(-{\widetilde N})_\infty^\tau \right)^{1\over q-1}\mathscr{E}(N)_\tau^\infty\right] \\
=&\ E^{\mathscr{F}_\tau}\left[(\mathscr{E}(N)_\tau^\infty)^{1+{1\over q-1}}\right]\\
=&\ E^{\mathscr{F}_\tau}\left[(\mathscr{E}(N)_\tau^\infty)^p\right]\le c_p.
\end{split}
\end{eqnarray}
Here and in the following, we denote by $\widetilde E$ the expectation operator with respect to the probability $\widetilde P$.
From the proof of \cite[Theorem 2.4, pages 33--34]{Kazamaki}, we have
\begin{eqnarray}
    \begin{split}
    &e^{{1\over q-1}{\widetilde E}^{\mathscr{F}_\tau}\left[\log^+\left(\mathscr{E}(-{\widetilde N})_\infty^\tau \right)\right]}\\
\le &\ e^{{\widetilde E}^{\mathscr{F}_\tau}\left[\log^+\left(\mathscr{E}(-{\widetilde N})_\infty^\tau \right)^{1\over q-1}\right]}\\
\le &\ e^{{\widetilde E}^{\mathscr{F}_\tau}\left[\log\left(\left(\mathscr{E}(-{\widetilde N})_\infty^\tau \right)^{1\over q-1}+1\right)\right]}\\
\le & {\widetilde E}^{\mathscr{F}_\tau}\left[\left(\mathscr{E}(-{\widetilde N})_\infty^\tau \right)^{1\over q-1}\right]+1\le c_p+1,
\end{split}
\end{eqnarray}
which gives the following inequality;
\begin{eqnarray}
    \begin{split}
{\widetilde E}^{\mathscr{F}_\tau}\left[\log^+\left(\mathscr{E}(-{\widetilde N})_\infty^\tau \right)\right]\le &\ (q-1)\log (c_p+1).
\end{split}
\end{eqnarray}
Therefore, we have
\begin{eqnarray}
    \begin{split}
& {\widetilde E}^{\mathscr{F}_\tau}\left[\langle N\rangle_\tau^\infty\right]
\le \ 2{\widetilde E}^{\mathscr{F}_\tau}\left[\widetilde N^\infty-\widetilde N_\tau+{1\over2}\langle \widetilde N\rangle_\tau^\infty\right]\\
\le &\ 2{\widetilde E}^{\mathscr{F}_\tau}\left[\log^+\left(\mathscr{E}(-{\widetilde N})_\infty^\tau \right)\right]
\le {\bar K}^2 :=\ 2(q-1)\log (c_p+1), \quad \text{ with $\bar K\ge 0$}.
\end{split}
\end{eqnarray}

Now choose $\bar p>1$ such that $\Phi(\bar p)> \bar K$. Then, we have $\Phi(\bar p)> \|-{\widetilde N}\|_{BMO_2(P^i)}$ and the following reverse H\"older inequality ($R_{\bar p}$) holds:
$$
{\widetilde E}^{\mathscr{F}_\tau}\left[\left(\mathscr{E}(-{\widetilde N})_\tau^\infty\right)^{\bar p}\right]\le c_{\bar p}.
$$
Therefore, we have
\begin{eqnarray}
    \begin{split}
&\ \|M\|^2_{BMO_2(P)}=\sup_{\tau}E^{\mathscr{F}_\tau}\left[\langle M\rangle_\tau^\infty\right]\\
=&\ \sup_{\tau} {\widetilde E}^{\mathscr{F}_\tau}\left[\langle M\rangle_\tau^\infty\mathscr{E}(-{\widetilde N})_\tau^\infty\right]\\
\le &\ \sup_{\tau} {\widetilde E}^{\mathscr{F}_\tau} \left[\left(\langle M\rangle_\tau^\infty\right)^{\bar q}\right]^{1\over \bar q}c_{\bar p}^{1\over \bar p}\\
\le &\ L_{2\bar q}^2c_{\bar p}^{1\over \bar p}\|M\|^2_{BMO_2(\widetilde P)}.
\end{split}
\end{eqnarray}
\end{proof}

Define
\begin{equation}
\phi(y):=\gamma^{-2}[\exp{(\gamma |y|)}-\gamma |y|-1], \quad y\in R.
\end{equation}
Then, we have for $y\in R, $
\begin{equation}
\phi'(y)=\gamma^{-1}[\exp{(\gamma |y|)}-1]\mbox{sgn}(y), \quad \phi''(y)=\exp{(\gamma |y|)}, \quad \phi''(y)-\gamma |\phi'(y)|=1.
\end{equation}

We have the following existence and uniqueness, a priori estimate, and comparison for one-dimensional BSDEs with unbounded data.

\begin{lem}\label{Quad bsde} Assume that (i) the function $f: \Omega\times [0,T]\times R^d\to R$  has the following quadratic growth and locally Lipschitz continuity in the last variable:
\begin{equation}
\begin{split}
|f(s, z)|\le&\  C+{\gamma\over2}|z|^2, \quad z\in R^d;\\
|f(s,z_1)-f(s,z_2)|\le & C(1+|z_1|+|z_2|)|z_1-z_2|, \quad z_1,z_2\in R^d;
\end{split}
\end{equation}
(ii) the process $f(\cdot, z)$ is $\mathscr{F}_t$-adapted for each $z\in R^d$; and (iii) the process $g: \Omega\times [0,T]\to R$ is $\mathscr{F}_t$-adapted and $|g_s|\le |H_s|^{1+\alpha}$ such that the stochastic integral $H\cdot W$ is a BMO martingale. Then for bounded $\xi$, the following BSDE
\begin{equation}\label{new bsde}
Y_t=\xi+\int_t^T [f(s, Z_s)+g_s]\, ds-\int_t^TZ_s\, dW_s, \quad t\in [0,T]
\end{equation}
has a unique solution $(Y, Z)$ such that $Y$ is (essentially) bounded and $Z\cdot W$ is a BMO martingale. Further more, we have
$$
e^{\gamma |Y_t|}\le E^{\mathscr{F}_t}\left[e^{\gamma \xi+\gamma\int_t^T|g(s)|\, ds }\right].
$$
Let $(\widetilde{Y}, \widetilde{Z})$ solve the following BSDE:
$$
Y_t=\widetilde{\xi}+\int_t^T [{\widetilde f}(s, Z_s)+g(s)]\, ds-\int_t^TZ_s\, dW_s, \quad t\in [0,T]
$$
where the pair $(\widetilde f, \widetilde \xi)$ has the same above-mentioned properties to $(f, \xi)$,  $\widetilde{f}\ge f$,  and $\widetilde{\xi}\ge \xi$. Then we have $\widetilde{Y}_t\ge Y_t$.
\end{lem}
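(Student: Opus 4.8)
\medskip
\noindent\textbf{Overall strategy.} The plan is to prove the four assertions in increasing order of difficulty. Uniqueness, comparison and the exponential a priori bound will follow from soft arguments (a linearization of the difference of two solutions followed by a Girsanov change of measure, respectively the exponential substitution $e^{\gamma Y}$), so the real work is the existence part, which I would obtain by truncating $g$, invoking the known theory of one-dimensional quadratic BSDEs with bounded data, and passing to the limit. Both of the quoted BMO inequalities enter throughout: the John--Nirenberg inequality to bound exponential moments of $\int_0^T|H_s|^2\,ds$ (the hypothesis $\alpha<1$ is crucial here), and the reverse H\"older inequality to bound the $L^p$-moments ($p>1$) of the Girsanov densities that appear.

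\medskip
\noindent\textbf{A priori bound, uniqueness and comparison.} For the bound, apply It\^o's formula to $e^{\gamma Y_t}$; using $dY_t=-(f(t,Z_t)+g_t)\,dt+Z_t\,dW_t$,
\[
d\!\left(e^{\gamma Y_t}\right)=\gamma e^{\gamma Y_t}\!\left[\tfrac{\gamma}{2}|Z_t|^2-f(t,Z_t)-g_t\right]dt+\gamma e^{\gamma Y_t}Z_t\,dW_t,
\]
and the growth hypothesis $f(t,z)\le C+\tfrac{\gamma}{2}|z|^2$ makes the bracket at least $-(C+|g_t|)$. Hence $e^{\gamma\int_0^t(C+|g_s|)ds}e^{\gamma Y_t}$ is a local submartingale; since $Y$ is bounded, $Z\cdot W\in BMO$, and, by H\"older's inequality $\int_0^T|H_s|^{1+\alpha}ds\le T^{(1-\alpha)/2}\big(\int_0^T|H_s|^2ds\big)^{(1+\alpha)/2}$ with exponent $(1+\alpha)/2<1$ together with John--Nirenberg, the random variable $e^{\gamma\int_0^T(C+|g_s|)ds}$ has moments of every order, the local-martingale part is a true martingale; so the process is a true submartingale and $e^{\gamma Y_t}\le E^{\mathscr{F}_t}\big[e^{\gamma\xi+\gamma\int_t^T(C+|g_s|)ds}\big]$. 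Applying this to $-Y$ (which solves the analogous equation with generator $-f(s,-z)$, terminal value $-\xi$ and driver $-g$, under the same hypotheses) and combining gives the stated bound on $e^{\gamma|Y_t|}$. For uniqueness and comparison, let $(Y,Z)$ and $(\widetilde{Y},\widetilde{Z})$ solve the two equations, set $\delta Y:=\widetilde{Y}-Y$, $\delta Z:=\widetilde{Z}-Z$, and write
\[
\widetilde{f}(s,\widetilde{Z}_s)-f(s,Z_s)=\langle\beta_s,\delta Z_s\rangle+\big(\widetilde{f}(s,Z_s)-f(s,Z_s)\big),
\]
where $\beta_s$ is the difference quotient of $\widetilde{f}(s,\cdot)$ along the segment $[Z_s,\widetilde{Z}_s]$, so that $|\beta_s|\le C(1+|Z_s|+|\widetilde{Z}_s|)$ by the local Lipschitz hypothesis; since $Z\cdot W$ and $\widetilde{Z}\cdot W$ are BMO, so is $\beta\cdot W$. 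The common driver $g$ cancels in $\delta Y$, and under $d\widehat{P}:=\mathscr{E}(\beta\cdot W)_0^\infty\,dP$ --- a probability measure because $\beta\cdot W\in BMO$ --- Girsanov's theorem gives $\delta Y_t=\widehat{E}^{\mathscr{F}_t}\big[(\widetilde{\xi}-\xi)+\int_t^T(\widetilde{f}(s,Z_s)-f(s,Z_s))\,ds\big]$, the $\widehat{P}$-stochastic integral being a true martingale since, by~\eqref{uniform BMO estimate} applied with $N=\beta\cdot W$, the process $\delta Z\cdot\widehat{W}=\delta Z\cdot W-\langle\delta Z\cdot W,\beta\cdot W\rangle$ is $\widehat{P}$-BMO. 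This expression is $\ge0$ when $\widetilde{\xi}\ge\xi$ and $\widetilde{f}\ge f$ (comparison), and $=0$ when the two equations coincide (uniqueness, whence $\delta Z=0$ as well).

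\medskip
\noindent\textbf{Existence.} Put $g^n:=(g\wedge n)\vee(-n)$; it is bounded, satisfies $|g^n|\le|g|\le|H|^{1+\alpha}$, and $g^n\to g$ everywhere. Since $(\xi,f+g^n)$ is bounded data with $f+g^n$ of quadratic growth in $z$ and continuous (as $f$ is locally Lipschitz in $z$), the classical existence theory for one-dimensional quadratic BSDEs with bounded terminal value (Kobyla\'nski~\cite{Koby2000}, Briand--Hu~\cite{BriandHu2006}) provides a solution $(Y^n,Z^n)$ of the equation with generator $f(\cdot,\cdot)+g^n$ and terminal value $\xi$, with $Y^n$ bounded and $Z^n\cdot W\in BMO$ (uniqueness of this $(Y^n,Z^n)$ is not needed). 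By the a priori bound, $e^{\gamma|Y^n_t|}\le E^{\mathscr{F}_t}\big[e^{\gamma|\xi|+\gamma\int_t^T(C+|H_s|^{1+\alpha})ds}\big]$, and the right-hand side is bounded uniformly in $n$ and $t$: by H\"older and Young ($\gamma x^{(1+\alpha)/2}\le\eps x+C_\eps$, using $(1+\alpha)/2<1$) together with the scaled John--Nirenberg inequality, $\sup_t E^{\mathscr{F}_t}\big[e^{\gamma\int_t^T(C+|H_s|^{1+\alpha})ds}\big]<\infty$. Thus $\sup_n|Y^n|_\infty\le M_0<\infty$, and then a standard computation --- It\^o's formula for $\phi(Y^n)$, whose defining identity $\phi''-\gamma|\phi'|=1$ turns the quadratic part of $f$ into a favourable sign, combined with $H\cdot W\in BMO$ --- gives $\sup_n\|Z^n\cdot W\|_{BMO_2}\le M_1<\infty$. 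To pass to the limit, fix $m,n$, set $\delta Y:=Y^n-Y^m$, $\delta Z:=Z^n-Z^m$, and linearize as above: $\delta Y_t=\int_t^T\big[\langle\beta^{nm}_s,\delta Z_s\rangle+(g^n_s-g^m_s)\big]ds-\int_t^T\delta Z_s\,dW_s$ with $\|\beta^{nm}\cdot W\|_{BMO_2}\le K$ uniformly. Under $d\widehat{P}^{nm}:=\mathscr{E}(\beta^{nm}\cdot W)_0^\infty dP$ one obtains $|\delta Y_t|\le E^{\mathscr{F}_t}\big[\mathscr{E}(\beta^{nm}\cdot W)_t^\infty\int_0^T|g^n_s-g^m_s|\,ds\big]$, and here the reverse H\"older inequality supplies a fixed $p>1$ and $c_p>0$ with $E^{\mathscr{F}_\tau}\big[(\mathscr{E}(\beta^{nm}\cdot W)_\tau^\infty)^p\big]\le c_p$ for every stopping time $\tau$ and all $m,n$; by H\"older (exponents $p,p'$) this yields $|\delta Y_t|\le c_p^{1/p}\big(E^{\mathscr{F}_t}[(\int_0^T|g^n_s-g^m_s|ds)^{p'}]\big)^{1/p'}$. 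Since $\int_0^T|g^n_s-g^m_s|ds\to0$ a.s. as $m,n\to\infty$, dominated by $2\int_0^T|H_s|^{1+\alpha}ds$ which has moments of every order, Doob's maximal inequality at a large exponent gives $E[\sup_t|\delta Y_t|^q]\to0$ for each $q<\infty$, so $Y^n\to Y$ in $\mathscr{S}^q$ with $|Y|_\infty\le M_0$. An It\^o estimate for $|\delta Y|^2$, using the uniform $BMO_2$ bound on $Z^n\cdot W$ (and the energy inequality for $\int_0^T|Z^n_s|^2ds$), the bound $|f(s,z_1)-f(s,z_2)|\le C(1+|z_1|+|z_2|)|z_1-z_2|$, and the $\mathscr{S}^q$-convergence of $Y^n$, then shows $Z^n\to Z$ in $L^2(\Omega\times[0,T])$ with $Z\cdot W\in BMO$; passing to the limit in the equation (the term $f(\cdot,Z^n)$ converging in $L^1(\Omega\times[0,T])$ by the same Lipschitz bound and Cauchy--Schwarz) shows that $(Y,Z)$ solves~\eqref{new bsde} with the required regularity.

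\medskip
\noindent\textbf{Main obstacle.} The delicate step is the passage to the limit. The available scalar quadratic-BSDE theory only handles bounded data, whereas the generator here carries the genuinely unbounded term $g$; what makes the argument go through is that $\alpha<1$ and $H\cdot W\in BMO$ force $\int_0^T|g_s|\,ds$ to have moments of every order (via H\"older and John--Nirenberg), so one works in an ``$L^p$ for every $p$'' setting rather than an ``$L^\infty$'' one. Making the linearized differences $Y^n-Y^m$ small in that setting requires upgrading the uniform $BMO_2$ bound on $\beta^{nm}\cdot W$ to uniform $L^p$-integrability of the exponential martingales $\mathscr{E}(\beta^{nm}\cdot W)$ --- which is precisely the reverse H\"older inequality --- without which the conditional expectations defining $\delta Y$ could not be controlled. (If $\alpha=1$ were allowed, $\int_0^T|H_s|^2ds$ would have only sub-threshold exponential moments, the uniform bound on $|Y^n|_\infty$ would break down, and indeed a global solution need not exist, as the counterexample of Frei and dos Reis~\cite{FreiReis} shows.)
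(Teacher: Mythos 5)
Your argument is correct, and for the a priori bound, the BMO estimate on $Z\cdot W$, and the comparison/uniqueness step it coincides with the paper's own proof (exponential substitution, It\^o on $\phi(Y)$, linearization plus Girsanov with a BMO density). Where you differ is in the existence step. The paper observes that once the John--Nirenberg inequality gives $E\bigl[e^{p\gamma(|\xi|+\int_0^T|g_s|\,ds)}\bigr]<\infty$ for some $p>1$, the equation falls directly within the scope of Briand--Hu's theorem on quadratic BSDEs with unbounded data, so it simply cites that theorem and inherits the exponential bound (then checks $Z\cdot W\in BMO$ a posteriori). You instead rebuild existence from the bounded-data case: truncate $g$, solve, obtain uniform $L^\infty$ and $BMO_2$ bounds from the a priori estimate and the $\phi$-computation, then show $(Y^n,Z^n)$ is Cauchy by linearizing the quadratic part and invoking the reverse H\"older inequality to convert the uniform $BMO_2$ bound on $\beta^{nm}\cdot W$ into a uniform $L^p$ bound on the Girsanov densities. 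This is more work but more self-contained, and it makes visible exactly where the restriction $\alpha<1$ and the BMO hypothesis on $H\cdot W$ are used; the paper's route is shorter but silently delegates that content to Briand--Hu. Two small points worth noting: the bound you obtain is $e^{\gamma|Y_t|}\le E^{\mathscr{F}_t}\bigl[e^{\gamma|\xi|+\gamma\int_t^T(C+|g_s|)\,ds}\bigr]$, carrying the constant $C$ from the growth hypothesis on $f$ --- the statement of the lemma drops this $C$ (and writes $\xi$ for $|\xi|$), which appears to be a typo in the paper rather than a gap in your reasoning; and Briand--Hu (2006) already treats unbounded forcing terms under an exponential-moment condition, so your characterisation of it as a ``bounded data'' result slightly undersells the citation, though this does not affect the validity of your construction.
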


\begin{proof} Let $\delta>0$ be sufficiently small such that ${1+\alpha\over 2}\delta \|H\cdot W\|^2_{BMO_2}<1$. From John-Nirenberg inequality~\eqref{JohnNir}, we have
\begin{eqnarray}
    \begin{split}
       E^{\mathscr{F}_t}\left[e^{{1+\alpha\over 2}\int_t^T\delta H_s^2\, ds}\right]
\le \ {1\over 1-{{1+\alpha}\over 2}\delta \|H\cdot W\|^2_{BMO}}.
\end{split}
\end{eqnarray}

Since
\begin{eqnarray}\label{delta}
    \begin{split}
\gamma |g|\le \gamma H^{1+\alpha}=&\ (\sqrt{\delta}H)^{1+\alpha}\cdot \delta^{-{1+\alpha\over2}}
\le\ {1+\alpha\over 2}\delta H^2+\delta_\alpha
\end{split}
\end{eqnarray}
with
$$\delta_\alpha:={1-\alpha\over 2}(\gamma\delta^{-{1+\alpha\over2}})^{2\over 1-\alpha}={1\over 2}\gamma^{2\over 1-\alpha}\delta^{-{1+\alpha\over 1-\alpha}}(1-\alpha),
$$
we have for some $p>1$,
\begin{eqnarray}
    \begin{split}
    &E\left[e^{p\gamma |\xi|+p\gamma \int_0^T|g_s|\, ds}\right]\\
     \le &\ e^{(\gamma |\xi|_\infty+)T}
    E^{\mathscr{F}_t}\left[e^{{1+\alpha\over 2}\int_t^T\delta H_s^2\, ds}\right] \\
\le &\ {1\over 1-{{1+\alpha}\over 2}\delta \|H\cdot W\|^2_{BMO}} e^{(\gamma |\xi|_\infty+\delta_\alpha)T}<\infty.
\end{split}
\end{eqnarray}

From Briand and Hu~\cite{BriandHu2006}, BSDE~\eqref{new bsde} admits a solution $(Y,Z)$, satisfying
\begin{eqnarray}
    \begin{split}
     e^{\gamma |Y_t|}\le &\  E^{\mathscr{F}_t}\left[e^{\gamma |\xi|+\int_t^T\gamma |g_s|\, ds}\right]
     \le \ {1\over 1-{{1+\alpha}\over 2}\delta \|H\cdot W\|^2_{BMO}} e^{(\gamma |\xi|_\infty+\delta_\alpha)T}.
\end{split}
\end{eqnarray}
This shows that $Y$ is bounded.

We now show that $Z\cdot W$ is a BMO martingale.  Using It\^o's formula to compute $\phi(Y_t)$, we have
\begin{eqnarray}
    \begin{split}
      &\phi(Y_t)+{1\over2} E^{\mathscr{F}_t}\left[\int_t^T\phi''(Y_s)|Z_s|^2\, ds\right] \\
      = \, &E^{\mathscr{F}_t}[\phi(\xi)] +E^{\mathscr{F}_t}\left[\int_t^T\phi'(Y_s)\left(f(s, Z_s)+g_s\right)\, ds\right]\\
      \le\, & \phi(|\xi|_\infty)+ E^{\mathscr{F}_t}\left[\int_t^T|\phi'(Y_s)|\left({\gamma\over2}|Z_s|^2+ |g_s|\right)\, ds\right].
    \end{split}
\end{eqnarray}
Therefore,
\begin{eqnarray}
    \begin{split}
      &\phi(Y_t)+{1\over2} E^{\mathscr{F}_t}\left[\int_t^T|Z_s|^2\, ds\right]
      \le\,  \phi(|\xi|_\infty)+ E^{\mathscr{F}_t}\left[\int_t^T|\phi'(Y_s)||g_s|\, ds\right]\\
      \le & \ \phi(|\xi|_\infty)+\phi'(|Y|_\infty) E^{\mathscr{F}_t}\left[\int_t^T|g_s|\, ds\right].
 \end{split}
\end{eqnarray}
In view of the estimate~\eqref{delta} (for $\delta=1$), we have
\begin{eqnarray}
    \begin{split}
     &\phi(Y_t)+{1\over2} E^{\mathscr{F}_t}\left[\int_t^T|Z_s|^2\, ds\right]\\
      \le & \ \phi(|\xi|_\infty)+\phi'(|Y|_\infty) \left[{1+\alpha\over 2\gamma}\  E^{\mathscr{F}_t}\!\!\int_t^T\!\! H^2\, ds+{1-\alpha\over 2\gamma}\gamma^{2\over 1-\alpha}T\right]\\
       \le & \ \phi(|\xi|_\infty)+\phi'(|Y|_\infty) \left[{1+\alpha\over 2\gamma}\|H\cdot W\|^2_{BMO_2(P)}+{1-\alpha\over 2}\gamma^{1+\alpha\over 1-\alpha}T\right],
    \end{split}
\end{eqnarray}
which yields that $Z\cdot W$ is a BMO-martingale.

Now we compare two pairs of solutions. Define
$$
\delta Y:= \widetilde Y-Y, \quad \delta Z:= \widetilde Z-Z.
$$
We have
\begin{eqnarray}
    \begin{split}
\delta Y_t=&\ \widetilde{\xi}-\xi+\int_t^T[\widetilde{f}(s, \widetilde{Z}_s)-\widetilde{f}(s, Z_s)+\widetilde{f}(s, Z_s)-f(s, Z_s)]\, ds-\int_t^T\delta Z_s\, dW_s.
\end{split}
\end{eqnarray}
In a straightforward way,  for some adapted process $\beta$ such that $|\beta_s|\le C(1+|Z_s|)$ (and therefore $\beta\cdot W$ is a BMO martingale), the last equation is written into the following one
\begin{eqnarray}
    \begin{split}
\delta Y_t=&\ \widetilde{\xi}-\xi+\int_t^T[\delta Z_s\beta_s +\widetilde{f}(s, Z_s)-f(s, Z_s)]\, ds-\int_t^T \delta Z_s \, dW_s.
\end{split}
\end{eqnarray}
Define
\begin{eqnarray}
    \begin{split}
\widetilde{W}_t:=W_t-\int_0^t\beta_s\, ds, \quad t\in [0,T]; \quad d\widetilde P:=\mathscr{E}(\beta\cdot W)_0^T dP.
\end{split}
\end{eqnarray}
Then, $\widetilde P$ is a new probability, and $\widetilde W$ is a Brownian motion with respect to $\widetilde P$. We have
\begin{eqnarray}
    \begin{split}
\delta Y_t= &\ \widetilde{\xi}-\xi+\int_t^T[\widetilde{f}(s, Z_s)-f(s, Z_s)]\, ds-\int_t^T\delta Z_s \, d\widetilde{W}_s.
\end{split}
\end{eqnarray}
Taking the obvious conditional expectation with respect to $\widetilde P$, we have the desired inequality $\delta Y_t\ge 0$. The uniqueness result follows immediately from the comparison result.
\end{proof}

We make the following three assumptions.

\medskip
($\mathscr{A}1$) The function $f:=(f^1, \ldots, f^n)^*: \Omega\times [0,T]\times R^d\to R^n$  has the following quadratic growth and locally Lipschitz continuity in the last variable:
\begin{equation}
\begin{split}
|f(s, z)|\le&\  C+{\gamma\over2}|z|^2, \quad z\in R^d;\\
|f(s,z_1)-f(s,z_2)|\le & C(1+|z_1|+|z_2|)|z_1-z_2|, \quad z_1,z_2\in R^d.
\end{split}
\end{equation}
For each $z\in R^d$, the process $f(\cdot, z)$ is $\mathscr{F}_t$-adapted.

($\mathscr{A}2$)  There is $\alpha\in [0,1)$ such that the function $h:=(h^1, \ldots, h^n)^*: \Omega\times [0,T]\times R^n\times R^{n\times d}\to R^n$  has the following quadratic growth and Lipschitz continuity in the last two variables:
\begin{equation}
\begin{split}
 \ &|h(t,y,z)|\le C(1+|y|+|z|^{1+\alpha}), \quad (y,z)\in  R^n\times R^{n\times d}; \\
&|h(t,y_1, z_1)-h(t,y_2,z_2)|\le C|y_1-y_2|+C(1+|z_1|^\alpha+|z_2|^\alpha)|z_1-z_2|,
\end{split}
\end{equation}
for $(y_j, z_j)\in  R^n\times R^{n\times d}$ with $j=1,2$. For each $(y,z)\in R^n\times R^{n\times d}$, the process $g(\cdot, y, z)$ is $\mathscr{F}_t$-adapted.

\medskip
($\mathscr{A}3$) The terminal condition $\xi:=(\xi^1, \ldots, \xi^n)^*$ is uniformly bounded.

\medskip

The main results of the paper are the following two theorems.

\begin{thm}\label{thm 1} Let assumptions $(\mathscr{A}1), (\mathscr{A}2),$ and $(\mathscr{A}3)$ be satisfied with $\alpha\in [0,1)$. Then, for any bounded $\xi$, BSDE~\eqref{bsde0} with generator $g$ satisfying~\eqref{diagQuadratic} has a unique local solution $(Y,Z)$.
\end{thm}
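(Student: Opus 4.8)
The plan is to set up a fixed-point (Picard-type) iteration on a suitable ball in the product space $\mathscr{S}^\infty(R^n)\times (\text{BMO})^n$, working on a short time interval $[T-\varepsilon,T]$, and to use Lemma~\ref{Quad bsde} componentwise to handle each scalar equation. The essential structural observation is that, because $g$ is \emph{diagonally} quadratic, the $i$th component of the generator is $f^i(t,z^i)+h^i(t,y,z)$ where only $f^i$ has quadratic growth and it depends \emph{only} on the $i$th row $z^i$. So if we freeze $(Y,Z)=(U,V)$ in the argument of $h$ and in the off-diagonal slots, the $i$th equation
$$
Y^i_t=\xi^i+\int_t^T\left[f^i(s,Z^i_s)+h^i(s,U_s,V_s)\right]ds-\int_t^T Z^i_s\,dW_s
$$
is a \emph{one-dimensional} quadratic BSDE with a quadratic term $f^i$ in its own $Z^i$ and an extra driver $g_s:=h^i(s,U_s,V_s)$ satisfying $|g_s|\le C(1+|U_s|+|V_s|^{1+\alpha})\le |H_s|^{1+\alpha}$ for a suitable process $H$ with $H\cdot W\in\text{BMO}$ (using $|U|_\infty$ bounded and $V\cdot W\in\text{BMO}$). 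Thus Lemma~\ref{Quad bsde} applies: each frozen equation has a unique solution $(Y^i,Z^i)$ with $Y^i$ bounded and $Z^i\cdot W\in\text{BMO}$, and it comes with the exponential a priori bound $e^{\gamma|Y^i_t|}\le E^{\mathscr{F}_t}[e^{\gamma\xi^i+\gamma\int_t^T|g_s|ds}]$ and the BMO estimate on $Z^i$ extracted in that proof. This defines the map $\Gamma:(U,V)\mapsto(Y,Z)$.

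The main work is to show $\Gamma$ maps a fixed ball $B_R:=\{|U|_\infty\le R,\ \|V\cdot W\|_{BMO_2}^2\le R\}$ into itself and is a contraction, for $\varepsilon$ small enough (depending on $C,\gamma,\alpha,|\xi|_\infty$ but not on $U,V$ inside the ball). For the self-map property, I would use the exponential estimate together with John--Nirenberg: the term $\gamma\int_t^T|g_s|ds\le \tfrac{1+\alpha}{2}\int_t^T\delta H_s^2\,ds+\delta_\alpha\varepsilon$ as in \eqref{delta}, and on a short interval $\|H\cdot W\|_{BMO_2}$ over $[T-\varepsilon,T]$ is small, so $E^{\mathscr{F}_t}[e^{\gamma\int_t^T|g_s|ds}]$ stays bounded by something close to $1$; this keeps $|Y^i|_\infty$ controlled, and then the $\phi$-trick computation in the proof of Lemma~\ref{Quad bsde} (again with the short-interval smallness of the BMO norms entering through the $T$-factors, now $\varepsilon$-factors) keeps $\|Z^i\cdot W\|_{BMO_2}^2\le R$. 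Here one must be a little careful: the bound on $\|Z\cdot W\|_{BMO}$ from that lemma involves $\|H\cdot W\|_{BMO}$, i.e.\ it involves $\|V\cdot W\|_{BMO}$, which is only $\le\sqrt R$, not small; the rescue is that it enters with coefficient $\tfrac{1+\alpha}{2\gamma}\|H\cdot W\|^2$, and since $H^{1+\alpha}$ comes from $|V|^{1+\alpha}$ with $\alpha<1$, Young's inequality lets us trade $|V|^{1+\alpha}$ against $\delta|V|^2+\delta_\alpha$ with $\delta$ small and the $\delta_\alpha$ cost multiplied by $\varepsilon$ — this is exactly why $\alpha<1$ is needed and why only a \emph{local} result is obtained.

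For the contraction, take $(U,V),(U',V')\in B_R$ with images $(Y,Z),(Y',Z')$; subtract the $i$th equations. Writing $f^i(s,Z^i_s)-f^i(s,Z'^i_s)=\beta^i_s\cdot(Z^i_s-Z'^i_s)$ with $|\beta^i_s|\le C(1+|Z^i_s|+|Z'^i_s|)$ (so $\beta^i\cdot W\in\text{BMO}$ by the self-map bounds), the difference $\delta Y^i$ solves a \emph{linear} BSDE under the measure $d\widetilde P^i:=\mathscr{E}(\beta^i\cdot W)_0^T\,dP$ with driver $h^i(s,U_s,V_s)-h^i(s,U'_s,V'_s)$, which by $(\mathscr{A}2)$ is bounded by $C|U_s-U'_s|+C(1+|V_s|^\alpha+|V'_s|^\alpha)|V_s-V'_s|$. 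Taking conditional $\widetilde E^i$-expectation and using Lemma~\ref{uniform BMO estimate} (the change-of-measure BMO equivalence, whose constants are uniform over $\beta^i\cdot W$ in a fixed BMO ball — which is why that lemma was stated with the uniform constant $K$) one estimates $|\delta Y^i|_\infty$ by $\varepsilon$ times a constant times the $B_R$-norm of $(\delta U,\delta V)$, plus a cross term handled by Hölder/reverse-Hölder and the BMO bound on $V,V'$; an analogous $\phi$- or Itô-based computation controls $\|\delta Z^i\cdot W\|_{BMO}$. Choosing $\varepsilon$ small makes the Lipschitz constant of $\Gamma$ on $B_R$ strictly less than $1$, and the Banach fixed point theorem gives a unique $(Y,Z)\in B_R$; a separate short argument (comparing any two local solutions via the same linearization) upgrades uniqueness from "within $B_R$" to "among all solutions with $Y$ bounded and $Z\cdot W\in\text{BMO}$."

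The step I expect to be the genuine obstacle is the self-map/contraction estimate for the $Z$-component in BMO norm: unlike the $Y$-estimate, which is essentially a one-shot application of John--Nirenberg on a short interval, the $Z$-estimate must absorb the quadratic feedback of $Z^i$ into itself \emph{and} the $|V|^{1+\alpha}$ contribution of the frozen argument, keeping all constants independent of the point in $B_R$; this forces the careful bookkeeping of the $\alpha<1$ Young split and the uniform reverse-Hölder constants, and is precisely the place where the argument would fail for $\alpha=1$ or for large $T$.
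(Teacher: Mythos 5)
Your proposal is correct and follows essentially the same route as the paper: a fixed-point map built by freezing $(U,V)$ in the subquadratic part $h^i$, solving each scalar diagonal equation via Lemma~\ref{Quad bsde}, a self-map estimate combining the exponential $\phi$-transform with John--Nirenberg and a Young split (exploiting $\alpha<1$ so the ``$\delta_\alpha$'' cost is multiplied by $\varepsilon$), and a contraction estimate obtained by linearizing $f^i$, changing measure to $P^i$, and invoking the uniform BMO-equivalence lemma to keep the constants $c_1,c_2$ uniform over the ball. You correctly identify both the role of the diagonal structure and the reason the result is only local; the only point where you are looser than the paper is the contraction estimate for the $Z$-component — the paper does not run a $\phi$-based computation there but instead squares the linearized integral equation, takes $E_i^{\mathscr{F}_t}$, and bounds the cross term with Cauchy--Schwarz plus the generic $L_4$ and reverse-Hölder constants — but this is a technical variant of the same idea rather than a different approach.
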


\medskip

\begin{thm}\label{thm 2} Let assumptions $(\mathscr{A}1)$ and $(\mathscr{A}3)$ be satisfied. Moreover, assume that there is a positive constant $C$ such that for $(s,y,z)\in [0,T]\times R^n\times R^{n\times d}$ and $(\bar y, \bar z)\in R^n\times R^{n\times d}$,
\begin{eqnarray}
    \begin{split}
|h(s,y,z)|\le C(1+|y|+|z|), \quad |h(s,y,z)-h(s,\bar y, \bar z)|\le C(|y-\bar y|+|z- \bar z|).
\end{split}
\end{eqnarray}
Then, the following BSDE
\begin{eqnarray}
    \begin{split}
Y_t^i=\, & \xi^i+\int_t^T \left[f^i(z^i_s)+h^i(s,Y_s,Z_s)\right]\, ds-\int_t^T Z_s^i\, dW_s, \quad t\in [0,T]; \quad i=1, \ldots,n
\end{split}
\end{eqnarray}
has a unique adapted solution $(Y, Z)$ on $[0, T]$ such that $Y$ is bounded. Further more,  $Z\cdot W$ is a $BMO(P)$ martingale.
\end{thm}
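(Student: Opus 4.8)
The plan is to obtain the global solution by an a priori estimate that is \emph{uniform in the length of the time interval}, and then patch together local solutions. First I would invoke Theorem~\ref{thm 1} to get, on $[T-\varepsilon, T]$, a local solution $(Y,Z)$ with $Y$ bounded and $Z\cdot W \in BMO$. The crucial observation is that the bound on $|Y|_\infty$ must not deteriorate as we iterate backwards. To see this, freeze $(Y,Z)$ in the non-diagonal terms: each component solves a one-dimensional BSDE of the form~\eqref{new bsde} with $f = f^i(z^i)$ (quadratic, locally Lipschitz) and driver $g^i_s := h^i(s,Y_s,Z_s)$, which under the $\alpha=0$ hypothesis satisfies $|g^i_s| \le C(1+|Y_s|+|Z_s|) =: |H_s|$ where $H\cdot W$ is BMO since $Z\cdot W$ is. By the exponential estimate in Lemma~\ref{Quad bsde},
\begin{equation*}
e^{\gamma|Y^i_t|} \le E^{\mathscr{F}_t}\left[e^{\gamma|\xi^i| + \gamma\int_t^T |g^i_s|\,ds}\right].
\end{equation*}
The term $\int_t^T |g^i_s|\,ds$ contains the linear-in-$Z$ piece, which is \emph{not} a priori bounded by a deterministic constant; here I would use Young's inequality $\gamma C|Z_s| \le \frac{1}{2}\delta|Z_s|^2 + \frac{(\gamma C)^2}{2\delta}$ together with the John--Nirenberg inequality~\eqref{JohnNir} (choosing $\delta$ small relative to $\|Z\cdot W\|_{BMO}$) to conclude $E^{\mathscr{F}_t}[e^{\gamma\int_t^T|g^i_s|\,ds}] < \infty$. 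This is where the difficulty lies, and it forces a genuinely coupled argument: the bound on $|Y|_\infty$ depends on $\|Z\cdot W\|_{BMO}$, while the bound on $\|Z\cdot W\|_{BMO}$ (via the $\phi(Y)$ It\^o computation in Lemma~\ref{Quad bsde}) depends on $|Y|_\infty$.

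The heart of the proof is therefore to \textbf{close this loop on a sufficiently short interval with constants that do not blow up}. On $[t_0, T]$ with $T-t_0 \le \varepsilon$, I would run the two estimates simultaneously: from the $\phi$-estimate,
\begin{equation*}
\frac{1}{2}E^{\mathscr{F}_{t_0}}\left[\int_{t_0}^T |Z_s|^2\,ds\right] \le \phi(|\xi|_\infty) + \phi'(|Y|_\infty)\left[\frac{1}{2\delta}E^{\mathscr{F}_{t_0}}\!\int_{t_0}^T C^2(1+|Y_s|)^2\,ds + \tfrac{\delta}{2}E^{\mathscr{F}_{t_0}}\!\int_{t_0}^T|Z_s|^2\,ds\right],
\end{equation*}
and for $\delta$ small the $|Z|^2$ term on the right is absorbed, giving $\|Z\cdot W\|_{BMO}^2 \le \Psi(|Y|_\infty, T-t_0)$ with $\Psi$ depending only on $C, \gamma, |\xi|_\infty$ and \emph{linearly-ish} on the interval length through the $\int_{t_0}^T(1+|Y_s|)^2\,ds$ term, which is $\le (T-t_0)(1+|Y|_\infty)^2$. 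Feeding this into the exponential estimate and using $\int_{t_0}^T|g^i_s|\,ds \le C(T-t_0)(1+|Y|_\infty) + \frac{1}{2}\delta\int_{t_0}^T|Z_s|^2 ds + \frac{C^2(T-t_0)}{2\delta}$, one gets $|Y|_\infty \le F(|Y|_\infty, T-t_0)$ where $F(\cdot, 0^+)$ reduces to the bound for the Lipschitz BSDE; by continuity there is $\varepsilon_0 > 0$ and a constant $R$ (depending only on $C,\gamma,|\xi|_\infty,T$) such that whenever $T-t_0 \le \varepsilon_0$ any solution satisfies $|Y|_\infty \le R$ and $\|Z\cdot W\|_{BMO} \le R$. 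I would then re-examine the proof of Theorem~\ref{thm 1} to confirm that $\varepsilon$ there can be taken depending only on these structural constants and on $R$ (not on the terminal time of the subinterval), so the local existence-uniqueness step applies on every window of length $\varepsilon_0 \wedge \varepsilon(R)$.

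Finally I would \textbf{concatenate}: partition $[0,T]$ into $N$ subintervals $[t_k, t_{k+1}]$ each of length $\le \varepsilon_0 \wedge \varepsilon(R)$; solve on $[t_{N-1}, T]$ with terminal value $\xi$; having obtained $(Y,Z)$ on $[t_k, T]$ with $|Y_{t_k}| \le R$, solve on $[t_{k-1}, t_k]$ with terminal value $Y_{t_k}$ (which is bounded by $R$, uniformly, so the same $\varepsilon$ works again); the pasted process is continuous at the junction points, adapted, bounded by $R$, and its stochastic-integral part is BMO on each piece hence BMO on $[0,T]$ (a finite concatenation of BMO martingales is BMO, with norm controlled by $\sum_k \|Z\cdot W\|_{BMO([t_k,t_{k+1}])}^2$). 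Uniqueness: if $(Y,Z)$ and $(Y',Z')$ are two bounded solutions with $Z\cdot W, Z'\cdot W \in BMO$, then on the last window they agree by the uniqueness part of Theorem~\ref{thm 1}, and moving backwards window by window (each time with identical, bounded terminal data) gives $Y=Y'$, $Z=Z'$ on all of $[0,T]$. The statement that $Z\cdot W$ is a $BMO(P)$ martingale is then exactly the BMO bound just established. The main obstacle, as flagged, is the interlocked dependence of the $|Y|_\infty$ and $\|Z\cdot W\|_{BMO}$ bounds; everything else is a careful bookkeeping of how the relevant constants depend on the interval length so that the patching does not accumulate blow-up.
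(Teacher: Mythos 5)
There is a genuine gap, and it sits exactly where you ``flagged'' the difficulty but did not resolve it. Your strategy needs a bound on $|Y|_\infty$ that is \emph{uniform across all of $[0,T]$}, depending only on $C,\gamma,|\xi|_\infty,T$, so that the same window length $\varepsilon$ can be re-used at every step of the backward concatenation. But the estimate you actually derive on a window $[t_0,T']$ controls $|Y|_\infty$ in terms of the terminal data at the \emph{right endpoint} of that window. When you paste backwards, the terminal data on $[t_{k-1},t_k]$ is $Y_{t_k}$, which you know only to be bounded by whatever $R$ you just produced --- not by $|\xi|_\infty$. Feeding $R$ back into $F(\cdot,\varepsilon)$ gives a potentially larger $R'$, and nothing in your argument prevents $R, R', R'', \ldots$ from growing unboundedly over the finitely many windows. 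Your assertion that ``the pasted process is \ldots bounded by $R$'' and that ``the same $\varepsilon$ works again'' is stated, not proved; it is the crux of the theorem.

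The paper closes this loop with a different ingredient that you do not have: the a priori estimate of Cheridito and Nam (\cite[Lemma~4.3]{CheriditoNam}), which asserts that \emph{any} local solution $(Y',Z')$ on $[T-\eta,T]$ satisfies
\begin{equation*}
|Y'_t|\le (C+1)\,e^{\frac12(C+1)^2(T-t)},\qquad t\in[T-\eta,T],
\end{equation*}
with $C$ determined by $|\xi|_\infty$ alone and the bound depending only on the distance $T-t$ to the \emph{final} terminal time --- not on the window length $\eta$ or on intermediate terminal values. This is a backward Gronwall-type estimate that crucially exploits the $\alpha=0$ (linear in $z$) structure of $h$. With it one can fix $\lambda:=(C+1)e^{\frac12(C+1)^2T}$ once and for all, choose $\eta_\lambda$ from Theorem~\ref{thm 1} depending only on $\lambda$, and iterate a fixed finite number of times. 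Your proposal tries to synthesize such a uniform bound from scratch via the coupled $\phi$-estimate and exponential estimate, but the continuity argument you sketch (``$F(\cdot,0^+)$ reduces to the Lipschitz bound, so by continuity there is $\varepsilon_0$ and $R$ \ldots'') does not deliver a constant $R$ independent of the window's terminal data, which is exactly what the concatenation requires. To repair the proposal you would need to supply an analogue of the Cheridito--Nam Gronwall estimate, either by citing it or by reproducing its proof; the rest of your patching and uniqueness argument is then in line with what the paper does.
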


\section{Local solution: the proof of Theorem~\ref{thm 1}}

For a pair of bounded adapted process $U$ and BMO martingale $V\cdot W$, in view of Lemma~\ref{Quad bsde}, the following decoupled system of quadratic BSDEs:
\begin{equation}\label{qbsde2}
Y_t^i=\xi^i+\int_t^T\left(f^i(s, Z_s^i)+h^i(U_s,V_s)\right)\, ds -\int_t^TZ_s^i\, dW_s, \ t\in [0, T]; i=1,\ldots,n,
\end{equation}
has a unique adapted solution $(Y,Z)$ such that $Y$ is bounded and $Z\cdot W$ is a BMO martingale. Define the quadratic solution map $\Gamma: (U,V)\mapsto \Gamma (U,V)$ as follows:
$$\Gamma(U,V):=(Y,Z), \quad \forall (U, V\cdot W)\in \mathscr{S}^\infty(R^n)\times (BMO_2(P))^n.$$
It is a transformation in the Banach space $\mathscr{S}^\infty(R^n)\times (BMO_2(P))^n$.

Define
\begin{equation}\label{C delta}
     C_\delta:=e^{{6\over 1-\alpha}\gamma CT+ {3\over 2} \gamma C \left({n\over \delta}\right)^{1+\alpha\over 2}T};
\end{equation}
\begin{equation}\label{beta}
    \beta:={1\over 2}(1-\alpha)C^{2\over 1-\alpha}(2(1+\alpha))^{1+\alpha\over 1-\alpha};
\end{equation}
\begin{eqnarray}\label{mu1,2}
    \begin{split}
\label{lambda}
    \mu_1:=&\ (1-\alpha)\left(1+{1-\alpha\over (1+\alpha)\gamma }\right)= 1-\alpha+{(1-\alpha)^2\over  (1+\alpha)\gamma} ,\\
     \mu_2:=&\ (1+\alpha)\left(1+{1-\alpha\over  (1+\alpha)\gamma }\right)= 1+\alpha+{1-\alpha\over \gamma};
\end{split}
\end{eqnarray}
\begin{equation}\label{mu}
\mu:= \left(\beta+C\mu_1\right)\gamma^{2\over \alpha-1}+C\mu_2.
\end{equation}
Consider the following standard quadratic equation of $A$:
$$
\delta A^2-\left(1+4Cn\gamma^{-2}e^{\gamma |\xi|_\infty} \delta\right)A +4Cn\gamma^{-2}e^{\gamma |\xi|_\infty} +4\mu C_\delta e^{{3\over 1-\alpha}n\gamma |\xi|_\infty} \varepsilon=0.
$$
The discriminant of the quadratic equation reads
\begin{eqnarray}
    \begin{split}
\Delta:=&\left(1+4Cn\gamma^{-2}e^{\gamma |\xi|_\infty} \delta\right)^2-4\delta \left[ 4Cn\gamma^{-2}e^{\gamma |\xi|_\infty} +4\mu C_\delta e^{{3\over 1-\alpha}n\gamma |\xi|_\infty} \varepsilon\right]\\
=&\left(1-4Cn\gamma^{-2}e^{\gamma |\xi|_\infty} \delta\right)^2-16\mu \delta C_\delta e^{{3\over 1-\alpha}n\gamma |\xi|_\infty} \varepsilon.
\end{split}
\end{eqnarray}
Take
\begin{eqnarray}\label{A}
    \begin{split}
\delta:= &{1\over 8Cn}\gamma^2 e^{-\gamma |\xi|_\infty}, \quad \varepsilon\le  \min\left\{{1\over 3nC}, {C n\over 8\mu C_\delta}\gamma^{-2} e^{(1-{3n\over 1-\alpha})\gamma |\xi|_\infty}\right\},\\
 A:=& {{1+4Cn\gamma^{-2}e^{\gamma |\xi|_\infty} \delta-\sqrt{\Delta}}\over 2 \delta}={3-2\sqrt{\Delta}\over 4\delta }\le {3\over 4\delta}={3\over 2} Ce^{\gamma |\xi|_\infty},
\end{split}
\end{eqnarray}
and we have
\begin{eqnarray}\label{root}
    \begin{split}
&\Delta \ge 0, \quad 1-\delta A={1+2\sqrt{\Delta}\over 4}>0, \\
&{Cn\over \gamma^2}e^{\gamma |\xi|_\infty}+\mu C_\delta{e^{{3\over 1-\alpha}n\gamma |\xi|_\infty}\over 1-\delta A}\varepsilon +{1\over4}A= {1\over2} A.
\end{split}
\end{eqnarray}

Throughout this section, we base our discussion on the time interval $[T-\varepsilon, T].$ 

We shall prove Theorem~\ref{thm 1} by showing that the quadratic solution map $\Gamma$ is a contraction on the ball $\mathscr{B}_{\varepsilon}$ defined by
\begin{eqnarray}
  \mathscr{B}_{\varepsilon}:=\left\{ (U,V): \|V\cdot W\|^2_{BMO_2}\le A,\ \   e^{{2\over 1-\alpha}n\gamma |U|_\infty}\le {C_\delta e^{{3\over 1-\alpha}n\gamma |\xi|_\infty}\over 1-\delta A}\right\}
\end{eqnarray}
for a positive constant  $\varepsilon$ (to be determined later).

\subsection{Estimation of the quadratic solution map}

We shall show the following assertion: $\Gamma(\mathscr{B}_{\varepsilon})\subset \mathscr{B}_{\varepsilon}$, that is,
\begin{equation}\label{desired}
\Gamma(U,V)\in \mathscr{B}_{\varepsilon}, \quad\quad \forall\ (U,V)\in \mathscr{B}_{\varepsilon}.
\end{equation}

{\it Step 1. Exponential transformation.}

\medskip
For each $i=1,\ldots,n$, using It\^o's formula to compute $\phi(Y_t^i)$, we have
\begin{eqnarray}\label{qbsde3}
    \begin{split}
      &\phi(Y_t^i)+{1\over2} E^{\mathscr{F}_t}\left[\int_t^T\phi''(Y_s^i)|Z_s^i|^2\, ds\right] \\
      = \, &E^{\mathscr{F}_t}[\phi(\xi^i)] +E^{\mathscr{F}_t}\left[\int_t^T\phi'(Y_s^i)\left(f^i(s, z_s^i)+h(U_s,V_s)\right)\, ds\right]\\
      \le\, & \phi(|\xi^i|_\infty)+ E^{\mathscr{F}_t}\left[\int_t^T|\phi'(Y_s^i)|\left(C+{\gamma\over2}|Z_s^i|^2+C(1+|U_s|+|V_s|^{1+\alpha})\right)\, ds\right].
    \end{split}
\end{eqnarray}
Therefore,
\begin{eqnarray}\label{quadratic estimate}
    \begin{split}
      &\phi(Y_t^i)+{1\over2} E^{\mathscr{F}_t}\left[\int_t^T|Z_s^i|^2\, ds\right] \\
      \le\, & \phi(|\xi^i|_\infty)+ CE^{\mathscr{F}_t}\left[\int_t^T|\phi'(Y_s^i)|\left(2+|U_s|+|V_s|^{1+\alpha}\right)\, ds\right].
    \end{split}
\end{eqnarray}
For $t\in [T-\epsilon, T],$ we have
\begin{eqnarray}
    \begin{split}
      &\sum_{i=1}^n\phi(Y_t^i)+{1\over2} E^{\mathscr{F}_t}\left[\int_t^T|Z_s|^2\, ds\right] \\
      \le\, & \sum_{i=1}^n\phi(|\xi^i|_\infty)+C\sum_{i=1}^n E^{\mathscr{F}_t}\left[\int_t^T|\phi'(Y_s^i)|\left(2+|U_s|+|V_s|^{1+\alpha}\right)\, ds\right]
   \end{split}
\end{eqnarray}
Since (in view of the definition of notation $\beta$ in~\eqref{beta})
$$
C\phi'(Y_s^i) |V_s|^{1+\alpha}\le {1\over4}|V_s|^2+\beta |\phi'(Y_s^i)|^{2\over 1-\alpha},
$$
we have
\begin{eqnarray}\label{Young}
    \begin{split}
      &\sum_{i=1}^n\phi(Y_t^i)+{1\over2} E^{\mathscr{F}_t}\left[\int_t^T|Z_s|^2\, ds\right] \\
        \le\, & C\sum_{i=1}^n\phi(|\xi^i|_\infty)+C\sum_{i=1}^n E^{\mathscr{F}_t}\left[\int_t^T|\phi'(Y_s^i)|\left(2+|U_s|\right)\, ds\right]\\
                &+\beta\sum_{i=1}^n E^{\mathscr{F}_t}\left[\int_t^T|\phi'(Y_s^i)|^{{2\over 1-\alpha}}\, ds\right]+{1\over4}E^{\mathscr{F}_t}\left[\int_t^T|V_s|^2\, ds\right]\\
                 \le\, & C\sum_{i=1}^n\phi(|\xi^i|_\infty)+2C\sum_{i=1}^n E^{\mathscr{F}_t}\left[\int_t^T|\phi'(Y_s^i)|\left(1+|U_s|\right)\, ds\right]\\
        &+\beta\sum_{i=1}^n E^{\mathscr{F}_t}\left[\int_t^T|\phi'(Y_s^i)|^{{2\over 1-\alpha}}\, ds\right]+{1\over4}E^{\mathscr{F}_t}\left[\int_t^T|V_s|^2\, ds\right].
   \end{split}
\end{eqnarray}
In view of the inequality for $x>0,$
$$
1+x\le \left(1+{1-\alpha\over \gamma(1+\alpha)}\right)e^{{\gamma(1+\alpha)\over 1-\alpha}x},
$$
we have
\begin{eqnarray}
    \begin{split}
&2C\sum_{i=1}^n E^{\mathscr{F}_t}\left[\int_t^T|\phi'(Y_s^i)|\left(1+|U_s|\right)\, ds\right] \\
\le & 2C\sum_{i=1}^n E^{\mathscr{F}_t}\left[\int_t^T|\phi'(Y_s^i)|\left(1+{1-\alpha\over \gamma (1+\alpha)}\right)e^{\gamma {1+\alpha\over 1-\alpha}|U_s|}\, ds\right].
\end{split}
\end{eqnarray}
Since (by Young's inequality)
$$
|\phi'(Y_s^i)|e^{\gamma {1+\alpha\over 1-\alpha}|U_s|}\le {1-\alpha\over2}|\phi'(Y_s^i)|^{2\over 1-\alpha}+{1+\alpha\over2}e^{{2 \over 1-\alpha}\gamma|U_s|},
$$
in view of the definition of the notations $\mu_1$ and $\mu_2$ in~\eqref{mu1,2}, we have
\begin{eqnarray}
    \begin{split}
&2C\sum_{i=1}^n E^{\mathscr{F}_t}\left[\int_t^T|\phi'(Y_s^i)|\left(1+|U_s|\right)\, ds\right] \\
\le &\  C\mu_1 E^{\mathscr{F}_t}\left[\int_t^T|\phi'(Y_s^i)|^{2\over 1-\alpha}\, ds\right]
+ C\mu_2 E^{\mathscr{F}_t}\left[\int_t^Te^{{2\gamma \over 1-\alpha}|U_s|}\, ds\right].
  \end{split}
\end{eqnarray}

In view of inequality~\eqref{Young}, we have
\begin{eqnarray}\label{fundamental}
    \begin{split}
    &\sum_{i=1}^n\phi(Y_t^i)+{1\over2} E^{\mathscr{F}_t}\left[\int_t^T|Z_s|^2\, ds\right] \\
             \le\, & C\sum_{i=1}^n\phi(|\xi^i|_\infty)+\left(\beta+C\mu_1\right)\sum_{i=1}^n E^{\mathscr{F}_t}\left[\int_t^T|\phi'(Y_s^i)|^{{2\over 1-\alpha}}\, ds\right]\\
        &+C\mu_2 E^{\mathscr{F}_t}\left[\int_t^Te^{{2\gamma \over 1-\alpha}|U_s|}\, ds\right]+{1\over4}E^{\mathscr{F}_t}\left[\int_t^T|V_s|^2\, ds\right]\\
  \le\, & C\sum_{i=1}^n\phi(|\xi^i|_\infty)+\gamma^{2\over \alpha-1}\left(\beta+C\mu_1\right)\sum_{i=1}^n E^{\mathscr{F}_t}\left[\int_t^T e^{{2\gamma \over 1-\alpha}|Y_s^i|}\, ds\right]\\
        &+C\mu_2 E^{\mathscr{F}_t}\left[\int_t^Te^{{2\gamma \over 1-\alpha}|U_s|}\, ds\right]+{1\over4}E^{\mathscr{F}_t}\left[\int_t^T|V_s|^2\, ds\right].
    \end{split}
\end{eqnarray}

{\it Step 2. Estimate of $e^{\gamma |Y|_\infty}$.}

\medskip
Noting that the solution of the following BSDE
$$
{\bar Y}_t^i=|\xi^i|+\int_t^T(C+{\gamma\over 2}|{\bar Z}_s^i|^2+|h^i(s, U_s, V_s)|)\, ds-\int_t^T{\bar Z}_s^i\, dW_s
$$
is explicitly given by
$$
{\bar Y}_t^i={1\over \gamma}\ln E^{\mathscr{F}_t}\left[e^{\gamma \left(|\xi^i|+\int_t^T(C+|h^i(U_s,V_s)|)\, ds\right)}\right],
$$
we have
$$
e^{\gamma {\bar Y}_t^i }=E^{\mathscr{F}_t}\left[e^{\gamma \left(|\xi^i|+\int_t^T(C+|h^i(U_s,V_s)|)\, ds\right)}\right].
$$

In view of Lemma~\ref{Quad bsde}, comparing the two pairs of solutions $(Y^i, Z^i)$ and $(\bar Y^i, \bar Z^i)$, we have for $i=1,\ldots,n$,
\begin{eqnarray}
    \begin{split}
&e^{{3\over 1-\alpha}\gamma |Y_t^i|}
\le\, e^{{3\over 1-\alpha}\gamma |{\bar Y}_t^i|}\\
\le\, & E^{\mathscr{F}_t}\left[e^{{3\over 1-\alpha}\gamma \left(|\xi^i|+\int_t^T(C+|h^i(U_s,V_s)|)\, ds\right)}\right]\\
\le\, & E^{\mathscr{F}_t}\left[e^{{3\over 1-\alpha}\gamma \left(|\xi|_\infty+C\int_t^T(2+|U_s|+|V_s|^{1+\alpha})\, ds\right)}\right].
\end{split}
\end{eqnarray}
Since (by Young's inequality)
\begin{eqnarray}
    \begin{split}
 {3\over 1-\alpha}\gamma C|V_s|^{1+\alpha} =&  {3\over 1-\alpha}\gamma C\left|\sqrt{{\delta\over n}}V_s\right|^{1+\alpha} \left({\delta\over n}\right)^{{1+\alpha\over 2}}\\
 \le &  {3\over 2} \gamma C \left({n\over \delta}\right)^{1+\alpha\over 2}+{\delta\over n}|V_s|^2,
\end{split}
\end{eqnarray}
in view of the definition of notation $C_\delta$ in~\eqref{C delta}, we have for $i=1,\ldots,n$,
\begin{eqnarray}
    \begin{split}
e^{{3\over 1-\alpha}\gamma |Y_t^i|}\le\, & C_\delta E^{\mathscr{F}_t}\left[e^{\left({3\over 1-\alpha}\gamma|\xi|_\infty+{3\over 1-\alpha}\gamma C\varepsilon |U|_\infty +{\delta \over n}\int_t^T|V_s|^2\, ds\right)}\right];
 \end{split}
\end{eqnarray}
and therefore,
\begin{eqnarray}
    \begin{split}
e^{{3\over 1-\alpha}\gamma |Y_t|}\le\, & C_\delta\ e^{\left({3\over 1-\alpha}n\gamma|\xi|_\infty+{3\over 1-\alpha}n\gamma C\varepsilon |U|_\infty\right)}
E^{\mathscr{F}_t}\left[e^{\delta\int_t^T|V_s|^2\, ds}\right].
\end{split}
\end{eqnarray}
In view of the fact that $\|\sqrt{\delta}V\cdot W\|^2_{BMO_2(P)}\le \delta A<1$ (see~\eqref{root}), applying John-Nirenberg inequality to the BMO martingale $\sqrt{\delta}V\cdot W$, we have
\begin{eqnarray}
    \begin{split}
e^{{3\over 1-\alpha}\gamma |Y_t|}\le\, & {C_\delta \ e^{({3\over 1-\alpha}n\gamma |\xi|_\infty+{3\over 1-\alpha}Cn\gamma \varepsilon |U|_\infty)}\over 1-\delta \|V\cdot W\|^2_{BMO_2}}\le {C_\delta \ e^{({3\over 1-\alpha}n\gamma |\xi|_\infty+{3\over 1-\alpha}Cn\gamma \varepsilon |U|_\infty)}\over 1-\delta A}.
 \end{split}
\end{eqnarray}
Since $3nC\varepsilon\le 1$ (see the choice of $\varepsilon$ in~\eqref{A}) and  $(U,V)\in \mathscr{B}_\varepsilon$, we have
\begin{eqnarray}
    \begin{split}
e^{({3\over 1-\alpha}\gamma |Y|_\infty)}\le\, &  {C_\delta e^{({3\over 1-\alpha}n\gamma |\xi|_\infty+{1\over 1-\alpha} |U|_\infty)}\over 1-\delta A}\\
\le\, & C_\delta { e^{{3\over 1-\alpha}n\gamma |\xi|_\infty}\over 1-\delta A}   \left( C_\delta {e^{{3\over 1-\alpha}n\gamma |\xi|_\infty}\over 1-\delta A}\right)^{1\over2}\\
\le\, & \left({ C_\delta e^{{3\over 1-\alpha}n\gamma |\xi|_\infty}\over 1-\delta A}\right)^{3\over2},
\end{split}
\end{eqnarray}
which gives a half of the desired result~\eqref{desired}.

\medskip
{\it Step 2. Estimate of $\|Z\cdot W\|_{BMO_2}^2$.}

\medskip
From inequality~\eqref{fundamental} and the definition of notation $\mu$, we have
\begin{eqnarray}\label{A*}
    \begin{split}
    {1\over2} E^{\mathscr{F}_t}\left[\int_t^T|Z_s|^2\, ds\right]
  \le\, & {Cn\over \gamma^2}e^{\gamma |\xi|_\infty}+\mu C_\delta{e^{{3\over 1-\alpha}n\gamma |\xi|_\infty}\over 1-\delta A}\varepsilon +{1\over4}A.
\end{split}
\end{eqnarray}
In view of~\eqref{root}, we have
\begin{eqnarray}
    \begin{split}
    \|Z\cdot W\|_{BMO_2}^2\le {Cn\over \gamma^2}e^{\gamma |\xi|_\infty}+\mu C_\delta{e^{{3\over 1-\alpha}n\gamma |\xi|_\infty}\over 1-\delta A}\varepsilon +{1\over4}A= {1\over2} A,
\end{split}
\end{eqnarray}
The other half  of the desired result~\eqref{desired} is then proved.

\subsection{Contraction of the quadratic solution map}
For $(U,V)\in \mathscr{B}_\varepsilon$ and $(\widetilde U, \widetilde V)\in \mathscr{B}_\varepsilon$, set
$$
(Y,Z):=\Gamma(U,V), \quad (\widetilde Y, \widetilde Z):= \Gamma(\widetilde U, \widetilde V). 
$$
That is, for $i=1,\ldots,n,$
\begin{eqnarray}
    \begin{split}
Y_t^i=\, & \xi^i+\int_t^T\left[f^i(Z_s^i)+h^i(U_s,V_s)\right]\, ds -\int_t^TZ_s^i dW_s,\\
{\widetilde Y}_t^i=\, & \xi^i+\int_t^T\left[f^i({\widetilde Z}_s^i)+h^i({\widetilde U}_s,{\widetilde V}_s)\right]\, ds -\int_t^T{\widetilde Z}_s^i dW_s.
\end{split}
\end{eqnarray}
Fix $i$, we can define the vector process $\beta(i)$ in an obvious way such that
\begin{eqnarray}
    \begin{split}
&|\beta_s(i)|\le  C(1+|Z_s^i|+|{\widetilde Z}_s^i|),\\
&{f^i(Z^i)-f^i(\widetilde Z^i)}=  (Z^i-\widetilde Z_s^i)\beta_s(i).
\end{split}
\end{eqnarray}
Then $\widetilde W_t(i):=W_t-\int_0^t\beta_s(i)\, ds$  is a Brownian motion under the equivalent probability measure $P^i$ defined by
$$dP^i: =\mathscr{Exp} (\beta(i)\cdot W)_0^T\, dP,$$
 and
from the above-established a priori estimate that there is $K>0$ such that $\|\beta(i)\cdot W\|^2_{BMO_2}\le K^2:=3C^2T+6C^2A$.

In view of the following equation
\begin{eqnarray}
    \begin{split}
Y_t^i-{\widetilde Y}_t^i +\int_t^T(Z_s^i-{\widetilde Z}_s^i) \, d{\widetilde W}_s(i)=\, & \int_t^T\left[h^i(U_s,V_s)-h^i({\widetilde U}_s,{\widetilde V}_s)\right]\, ds,
\end{split}
\end{eqnarray}
taking square and then the conditional expectation with respect to $P^i$ (denoted by $E_i^{\mathscr{F}_t}$) on both sides of the last equation, we have the following standard estimates:
\begin{eqnarray}\label{square}
    \begin{split}
&|Y_t^i-{\widetilde Y}_t^i|^2+E_i^{\mathscr{F}_t}\left[\int_t^T|Z_s^i-{\widetilde Z}_s^i|^2\, ds\right]\\
= \, & E_i^{\mathscr{F}_t}\left[\left(\int_t^T\left(h^i(U_s,V_s)-h^i({\widetilde U}_s,{\widetilde V}_s)\right)\, ds\right)^2\right]\\
\le\, &C^2E_i^{\mathscr{F}_t}\left[\left(\int_t^T\left(|U_s-{\widetilde U}_s|+(1+|V_s|^\alpha+|{\widetilde V}_s|^\alpha)|V_s-{\widetilde V}_s|\right)\, ds\right)^2\right]\\
\le\, &2C^2(T-t)^2|U-{\widetilde U}|^2_\infty\\
&+2C^2 E_i^{\mathscr{F}_t}\left[\int_t^T (1+|V_s|^{\alpha}+|{\widetilde V}_s|^{\alpha})^2\, ds\int_t^T|V_s-{\widetilde V}_s|^2\, ds\right]\\
\le\, &2C^2(T-t)^2|U-{\widetilde U}|^2_\infty\\
&+6C^2 E_i^{\mathscr{F}_t}\left[\left(\int_t^T (1+|V_s|^{2\alpha}+|{\widetilde V}_s|^{2\alpha})\, ds\right)^2\right]^{1\over2}E_i^{\mathscr{F}_t}\left[\left(\int_t^T|V_s-{\widetilde V}_s|^2\, ds\right)^2\right]^{1\over2}.
\end{split}
\end{eqnarray}
Let $L_4$ be a generic constant for the following dominance:
$$
\sup_{\tau}E^{\mathscr{F}_\tau}\left[(\langle M\rangle)^2\right] \le L_4^2 E \|M\|_{BMO}^2:=L_4^2  \sup_{\tau}E^{\mathscr{F}_\tau}\left[\langle M\rangle\right]
$$
for any BMO martingale $M$. We have
\begin{eqnarray}
    \begin{split}
E_i^{\mathscr{F}_t}\left[\left(\int_t^T|V_s-{\widetilde V}_s|^2\, ds\right)^2\right]^{1\over2}\le & \ L_4^2 \|(V-{\widetilde V})\cdot W\|^2_{BMO_2(P^i)}\\
\le &\  L_4^2c_2^2\|(V-{\widetilde V})\cdot W\|^2_{BMO_2(P)}
\end{split}
\end{eqnarray}
and for $t\in [T-\varepsilon, T]$,
\begin{eqnarray}
    \begin{split}
&\ E_i^{\mathscr{F}_t}\left[\left(\int_t^T (1+|V_s|^{2\alpha}+|{\widetilde V}_s|^{2\alpha})\, ds\right)^2\right]^{1\over2}\\
\le &\ E_i^{\mathscr{F}_t}\left[\left(\varepsilon+\varepsilon^{1-\alpha}\left(\int_t^T |V_s|^2\, ds\right)^{\alpha}+\varepsilon^{1-\alpha}\left(\int_t^T|{\widetilde V}_s|^2\, ds\right)^{\alpha}\right)^2\right]^{1\over2}\\
\le &\ \varepsilon^{1-\alpha} E_i^{\mathscr{F}_t}\left[\left(\varepsilon^\alpha+\left(\int_t^T |V_s|^2\, ds\right)^{\alpha}+\left(\int_t^T|{\widetilde V}_s|^2\, ds\right)^{\alpha}\right)^2\right]^{1\over2}\\
\le &\ \varepsilon^{1-\alpha} E_i^{\mathscr{F}_t}\left[\left(T^\alpha+2-2\alpha+\alpha\int_t^T |V_s|^2\, ds+\alpha\int_t^T|{\widetilde V}_s|^2\, ds\right)^2\right]^{1\over2}\\
\le &\ \varepsilon^{1-\alpha} \left[T^\alpha+2+\alpha E_i^{\mathscr{F}_t}\left[\left(\int_t^T |V_s|^2\, ds\right)^2\right]^{1\over2}+\alpha E_i^{\mathscr{F}_t}\left[\left(\int_t^T|{\widetilde V}_s|^2\, ds\right)^2\right]^{1\over2}\right]\\
\le &\ \varepsilon^{1-\alpha} \left[T^\alpha+2+\alpha L_4^2\|V\cdot W\|^2_{BMO_2(P^i)}+\alpha L_4^2\|{\widetilde V}\cdot W\|^2_{BMO_2(P^i)}\right]\\
\le &\ \varepsilon^{1-\alpha} \left[T^\alpha+2+\alpha L_4^2c_2^2\|V\cdot W\|^2_{BMO_2(P)}+\alpha L_4^2c_2^2\|{\widetilde V}\cdot W\|^2_{BMO_2(P)}\right]\\
\le &\ \varepsilon^{1-\alpha} \left(T^\alpha+2+2\alpha L_4^2c_2^2A\right).
\end{split}
\end{eqnarray}
Concluding the above estimates, we have for $t\in [T-\varepsilon, T]$,
\begin{eqnarray}
    \begin{split}
  &|Y_t^i-{\widetilde Y}_t^i|^2+E_i^{\mathscr{F}_t}\left[\int_t^T|Z_s^i-{\widetilde Z}_s^i|^2\, ds\right]\\
\le\, &2C^2\varepsilon^2|U-{\widetilde U}|^2_\infty\\
&+6C^2 L_4^2c_2^2\left(T^\alpha+2+2\alpha L_4^2c_2^2A\right)\varepsilon^{1-\alpha} \|(V-{\widetilde V})\cdot W\|^2_{BMO_2(P)}.
\end{split}
\end{eqnarray}
In view of estimates~\eqref{uniform BMO estimate}, we have for $t\in [T-\varepsilon, T]$,
\begin{eqnarray}
    \begin{split}
&|Y-{\widetilde Y}|^2_\infty+c_1^2\|(Z-{\widetilde Z})\cdot W\|^2_{BMO_2(P)}\\
\le \, &4C^2n\ \varepsilon^2|U-{\widetilde U}|^2_\infty
+12C^2 L_4^2c_2^2n\left(T^\alpha+2+2\alpha L_4^2c_2^2A\right)\varepsilon^{1-\alpha} \|(V-{\widetilde V})\cdot W\|^2_{BMO_2(P)}.
\end{split}
\end{eqnarray}

Now it is standard to derive the desired results.

\section{Global solution: the proof of Theorem~\ref{thm 2}}

From Cheriditi and Nam~\cite[Lemma 4.3, page 13]{CheriditoNam}, any local solution $(Y', Z')$ on $[T-\eta, T]$ ($\eta>0$) of BSDE has the following uniform estimate:
\begin{equation}\label{uniform estimate}
|Y'_t|\le (C+1)e^{{1\over2}(C+1)^2(T-t)}, \quad t\in [T-\eta, T]
\end{equation}
with $C$ depending on the bound of the absolute terminal value $|\xi|$ (being independent of $\eta$). This assertion together with our theorem on the local existence will be used to show in what follows that BSDE has a global solution $(Y, Z)$ on $[0,T]$.

Define
$$
\lambda :=(C+1)e^{{1\over2}(C+1)^2T}.
$$
Our theorem shows that there is $\eta_\lambda>0$ which only depends on $\lambda$, such that BSDE has a local solution $(Y, Z)$ on $[T-\eta_\lambda, T]$. Then, we have from the above estimate~\eqref{uniform estimate}
$$
|Y_{T-\eta_\lambda}|\le \lambda.
$$
Taking $T-\eta_\lambda$ as the terminal time, our theorem shows that BSDE has a local solution $(Y, Z)$ on $[T-2\eta_\lambda, T-\eta_\lambda]$. Then, viewing $(Y, Z)$ as a local solution on $[T-2\eta_\lambda, T]$, we have from the above estimate~\eqref{uniform estimate}
$$
|Y_{T-2\eta_\lambda}|\le \lambda.
$$
Repeating the preceding process, we can extend the pair $(Y, Z)$ to the whole interval $[0, T]$ within a finite steps such that $Y$ is uniformly bounded by $\lambda$. We now show that $Z\cdot W$ is a $BMO(P)$ martingale.

Identical to the proof of inequality~\eqref{quadratic estimate}, we have for $i=1,\ldots,n,$
 \begin{eqnarray}
    \begin{split}
      &\phi(Y_t^i)+{1\over2} E^{\mathscr{F}_t}\left[\int_t^T|Z_s^i|^2\, ds\right] \\
      \le\, & \phi(|\xi^i|_\infty)+ CE^{\mathscr{F}_t}\left[\int_t^T|\phi'(Y_s^i)|\left(2+|Y_s|+|Z_s|\right)\, ds\right]\\
       \le\, & \phi(|\xi|_\infty)+ C\phi'(\lambda)E^{\mathscr{F}_t}\left[\int_t^T\left(2+\lambda+|Z_s|\right)\, ds\right].
    \end{split}
\end{eqnarray}
Consequently, we have
\begin{eqnarray}
    \begin{split}
      & \|Z\cdot W\|^2_{BMO_2(P)}=\sup_{\tau}E^{\mathscr{F}_\tau}\left[\int_\tau^T|Z_s|^2\, ds\right] \\
            \le\, & 2n\phi(|\xi|_\infty)+ 2Cn\phi'(\lambda)(2+\lambda)T+ 2Cn\phi'(\lambda)\sqrt{T}\|Z\cdot W\|_{BMO_2(P)},
    \end{split}
\end{eqnarray}
which implies the following bound of  $\|Z\cdot W\|_{BMO_2(P)}$:
\begin{eqnarray}
    \begin{split}
      \|Z\cdot W\|_{BMO_2(P)}\le Cn\phi'(\lambda)\sqrt{T}+ \sqrt{2n\phi(|\xi|_\infty)+ 2Cn\phi'(\lambda)(2+\lambda)T}.
    \end{split}
\end{eqnarray}

Finally, we prove the uniqueness. Let $(Y,Z)$ and $(\widetilde Y, \widetilde Z)$ be two adapted solutions. Then, we have
\begin{eqnarray}
    \begin{split}
Y_t^i-{\widetilde Y}_t^i=\, & \int_t^T\left(h^i(Y_s,Z_s)-h^i({\widetilde Y}_s,{\widetilde Z}_s)\right)\, ds -\int_t^T(Z_s^i-{\widetilde Z}_s^i) \, d{\widetilde W}_s(i).
\end{split}
\end{eqnarray}
In our case of $\alpha=0$, similar to the first two inequalities in \eqref{square}, for any stopping time $\tau$ which takes values in $[T-\varepsilon, T]$, we have
\begin{eqnarray}
    \begin{split}
&|Y_\tau^i-{\widetilde Y}_\tau^i|^2+E_i^{\mathscr{F}_\tau}\left[\int_\tau^T|Z_s^i-{\widetilde Z}_s^i|^2\, ds\right]\\
= \, & E_i^{\mathscr{F}_\tau}\left[\left(\int_\tau^T\left(h^i(Y_s,Z_s)-h^i({\widetilde Y}_s,{\widetilde Z}_s)\right)\, ds\right)^2\right]\\
\le\, &C^2E_i^{\mathscr{F}_\tau}\left[\left(\int_\tau^T\left(|Y_s-{\widetilde Y}_s|+|Z_s-{\widetilde Z}_s|\right)\, ds\right)^2\right]\\
\le \, &2C^2\varepsilon^2|Y-\widetilde Y|_\infty^2+2C^2 \varepsilon E_i^{\mathscr{F}_\tau}\left[\int_\tau^T|Z_s-{\widetilde Z}_s|^2\, ds\right]\\
\le \, &2C^2\varepsilon^2|Y-\widetilde Y|_\infty^2+2C^2 \varepsilon \|(Z-{\widetilde Z})\cdot \widetilde W(i)\|^2_{BMO_2(P^i)}\\
\le \, &2C^2\varepsilon^2|Y-\widetilde Y|_\infty^2+2C^2 c_2^2\varepsilon \|(Z-{\widetilde Z})\cdot  W\|^2_{BMO_2(P)}.
\end{split}
\end{eqnarray}
Therefore, we have (on the interval $[T-\varepsilon, T]$)
\begin{eqnarray}
    \begin{split}
&\left|Y-{\widetilde Y}\right|^2_\infty+c_1^2\left\|(Z-{\widetilde Z})\cdot W\right\|^2_{BMO_2(P)} \\
\le \, &4C^2n\varepsilon^2\left|Y-\widetilde Y\right|_\infty^2+4C^2 c_2^2n\varepsilon \left\|(Z-{\widetilde Z})\cdot  W\right\|^2_{BMO_2(P)}.
\end{split}
\end{eqnarray}
Note that since
$$\beta(i)\le C(1+|Z^i|+|\widetilde Z^i|), \quad i=1,\ldots,n,$$
the two generic constants $c_1$ and $c_2$ only depend on the sum
$$
\left\|Z\cdot  W\right\|^2_{BMO_2(P)}+\left\|{\widetilde Z}\cdot  W\right\|_{BMO_2(P)}.
$$
Then when $\varepsilon$ is sufficiently small, we conclude that  $Y=\widetilde Y$ and $Z=\widetilde Z$ on $[T-\varepsilon, T]$. Repeating iteratively with a finite of times, we have the uniqueness
on the given interval $[0, T]$.

\bibliographystyle{siam}

\end{document}